\newtheorem{theorem}{Theorem}[section]
\newtheorem{prop}[theorem]{Proposition}
\newtheorem{lemma}[theorem]{Lemma}
\newtheorem{remark}[theorem]{Remark}
\newtheorem{definition}[theorem]{Definition}
\newtheorem{cor}[theorem]{Corollary}
\newtheorem{example}[theorem]{Example}
\newcommand{\Z}{{\mathbb{Z}}}
\begin{document}

\title{Positive divisors in symplectic geometry}

\author{Jianxun Hu 
                  \& Yongbin Ruan}
\address{Department of Mathematics\\ Zhongshan
University\\Guangzhou\\ P. R. China}
\email{stsjxhu@mail.sysu.edu.cn}
\thanks{${}^1$Partially supported by the NSFC Grant 10631050, NKBRPC(2006CB805905) and NCET-04-0795}
\address{Department of Mathematics\\ University of Michigan\\
Ann Arbor, MI 48109-1109\\  and \\ Yangtze Center of Mathematics
\\ Sichuan University \\ Chengdu, 610064, P.R.
China}\email{ruan@umich.edu}
\thanks{${}^3$supported by NSF Grant}

\maketitle

\tableofcontents

\section{Introduction}

Divisors or codimension two symplectic submanifolds play an
important role in Gromov-Witten theory and symplectic geometry.
For example, there is now a well-known  degeneration operation to
decompose a symplectic manifold $X$ into $X_1\cup_Z X_2$ a union
of $X_1, X_2$, along a common divisor $Z$. To utilize the above
degeneration to compute Gromov-Witten invariants inductively, one
needs to develop the relative Gromov-Witten theory of the pair
$(X,Z)$. Such a theory and its degeneration formula were first
constructed by Li-Ruan \cite{LR} ( see Ionel and Parker \cite{IP}
for a different version and  J. Li \cite{Li1, Li2} for an
algebraic treatment).

Maulik and Pandharipande \cite{MP} systematically studied the
degeneration formula for the degeneration $X\rightarrow X\cup_Z
P_Z$, where $P_Z$ is the projective closure of the normal bundle
$N_Z$. By introducing a certain partial order on relative
invariants, they interpreted the degeneration formula as a
``correspondence", a complicated upper triangular linear map from
relative invariants to absolute invariants. One consequence of
their correspondence is that the absolute and relative invariants
determine each other. Such a correspondence is a very powerful
tool in determining the totality of Gromov-Witten theory. But it
is not effective in determining any single invariant. A natural
question is if a divisor with a stronger condition will give us a
much stronger correspondence? We answer this affirmatively for
so-called positive symplectic divisors. We call a symplectic
divisor $Z$ {\em positive} if for some tamed almost complex
structure $J$, $C_1(N_Z)(A)>0$ for any $A$ represented by a
non-trivial $J$-sphere in $Z$. This is a generalization of ample
divisor from algebraic geometry. One of our main theorems is the
following comparison theorem.

We call $0\neq A\in H_2(Z, \Z)$ {\em stably effective} if there is
a nonzero genus zero Gromov-Witten invariant of $Z$ with class
$A$. Let $\pi: P_Z\rightarrow Z$ be the projection. Maulik and
Pandharipande show that any nonzero absolute or relative
Gromov-Witten invariant of $P_Z$ is determined by the
Gromov-Witten invariants of $Z$. It is a direct consequence of
their argument that if a nonzero genus zero Gromov-Witten or
relative Gromov-Witten invariant of $P_Z$ has class $B$, then
$\pi_*(B)=\sum_i a_i A_i$ for stably effective  $A_i\in H_2(Z,
\Z)$ and $a_i\in \Z_+$. Let $V=\min\, \{ C_1(N_Z(A))\mid A\in
H_2(Z,\Z)\, \mbox{is stably effective}\}$.

\begin{theorem}
Suppose that $Z$ is a positive divisor and  $V\geq  l$.  Then for
$A\in H_2(X,{\mathbb Z})$, $\alpha_i\in H^*(X,{\mathbb R})$, $
1\leq i\leq \mu$, and $\beta_j\in H^*(Z,{\mathbb R})$, $1\leq
j\leq l$, we have
\begin{eqnarray*}
&&\langle \alpha_1, \cdots,  \alpha_\mu,
 \iota^!(\beta_1), \cdots,  \iota^!(\beta_l)
 \rangle ^X_A\\
&=&  \sum_{\mathcal T}\langle  \alpha_1,\cdots, \alpha_\mu\mid
{\mathcal T}\rangle_A^{X,Z},
\end{eqnarray*}
where $\iota : Z\hookrightarrow X$  and $\iota^! = PD\iota_*PD$,
the summation runs over all possible weighted partitions
${\mathcal T}= \{(1, \gamma_1)$, $\cdots, (1,\gamma_q),
(1,[Z]),\cdots, (1,[Z])\}$ of $Z\cdot A$, where $\gamma_i$'s are
the products of some $\beta_j$ classes.
\end{theorem}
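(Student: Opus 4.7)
My plan is to apply the Li--Ruan degeneration formula to the standard degeneration $X\leadsto X\cup_Z P_Z$ and then use the positivity hypothesis together with $V\ge l$ to collapse the $P_Z$--side relative invariant to a simple combinatorial factor. The absolute insertions $\alpha_i$ distribute to the $X$--side, while each class $\iota^!(\beta_j)$, being Poincar\'e dual to a cycle supported on $Z$, naturally migrates to the $P_Z$--side as $\pi^*(\beta_j)\cup[Z_0]$. The formula then gives a sum of pairs
\[
\langle\alpha_1,\ldots,\alpha_\mu,\iota^!\beta_1,\ldots,\iota^!\beta_l\rangle^X_A=\sum_{A=A_1+A_2,\,\mathcal T}C_{\mathcal T}\,\langle\alpha_1,\ldots,\alpha_\mu\mid\mathcal T\rangle^{X,Z}_{A_1}\cdot\langle\pi^*\beta_1,\ldots,\pi^*\beta_l\mid\mathcal T^\vee\rangle^{(P_Z,Z_\infty)}_{A_2},
\]
with combinatorial coefficient $C_{\mathcal T}$ and $\mathcal T$ running over weighted partitions of the contact order $Z\cdot A$.

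\textbf{Reduction of the $P_Z$--side.} The main technical step is to show that the only nonvanishing contributions come from partitions of the stated form. From the observation recalled immediately before the theorem, a nonzero $P_Z$--side invariant forces $\pi_*(A_2)=\sum a_i A_i$ with each $A_i\in H_2(Z,\Z)$ stably effective and $a_i\in\Z_+$, whence $C_1(N_Z)(\pi_*A_2)\ge V\sum a_i$ by definition of $V$. Combining this bound with (i) the virtual dimension formula for the rubber moduli of $(P_Z,Z_0\sqcup Z_\infty)$, (ii) the codimension constraint imposed by the $l$ insertions $\pi^*\beta_j\cup[Z_0]$, which push the image toward the zero section, and (iii) the identity expressing the fiber degree of $A_2$ as the sum of contact multiplicities, yields an inequality that under $V\ge l$ can be saturated only when $\pi_*(A_2)=0$ and every contact weight equals $1$. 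Positivity of $Z$ is precisely what eliminates the possibility of a nontrivial horizontal bubble absorbing excess dimension.

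\textbf{Computation of the residual contribution and conclusion.} Once the contact profile is $(1,\ldots,1)$ and $\pi_*(A_2)=0$, the rubber moduli collapses to a fiberwise computation: the $P_Z$--side integral is nonzero only when the $l$ insertions $\beta_j$ are grouped according to a set partition $\{1,\ldots,l\}=S_1\sqcup\cdots\sqcup S_q$ and distributed at $q$ of the $Z\cdot A$ relative markings as the products $\gamma_i=\prod_{j\in S_i}\beta_j$, with the remaining $Z\cdot A-q$ markings carrying $[Z]$; in that case the integral is $1$. The $X$--side class is then forced to be $A_1=A$, and summing over all such combinatorial configurations reproduces exactly the right-hand side. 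The chief obstacle is the dimension bookkeeping in the reduction step: one must verify that positivity together with $V\ge l$ saturates the relevant inequality only in this ``all--ones'' configuration, leaving no room for higher--multiplicity or mixed contact profiles, since any slack would reintroduce the full Maulik--Pandharipande upper--triangular sum and destroy the clean identification above.
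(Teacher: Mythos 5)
Your overall strategy---degenerate $X$ into $X\cup_Z P_Z$, let the $\iota^!(\beta_j)$ insertions migrate to the $P_Z$-side, and argue that only fiber-class components with contact order one survive there---is the same as the paper's. The gap is in your ``Reduction of the $P_Z$-side'' paragraph, which is exactly the technical heart of the theorem and which you only assert. A dimension count on the relative moduli of $(P_Z,Z_\infty)$ (your (i)--(iii)) does not by itself kill the non-fiber classes: the virtual dimension involves $c_1(TZ)(\pi_*A_2)$ and the degrees of the relative insertions, neither of which is controlled by positivity of $N_Z$ or by $V\ge l$, so the dimension equation can a priori be satisfied with $\pi_*(A_2)\neq 0$ or with higher contact multiplicities; no inequality of the kind you invoke is derived, and I do not see how to derive one by pure bookkeeping. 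What the paper actually proves (Proposition \ref{prop3-1}) is a vanishing theorem whose proof is not a dimension count on the $(Y,D)$ moduli at all: one first shows (Lemma \ref{lemma3-2}) that $\mathrm{Coker}\,L^{Y,D}_{\Sigma,f}\cong\mathrm{Coker}\,L^Z_{\pi(\Sigma,f)}$---this is where positivity enters, via $H^1(\tilde f^*V)=0$ for the vertical line bundle because $c_1(N_Z)\ge 0$ on holomorphic curves in $Z$---so that $\pi$ induces a map of virtual neighborhoods $U^{Y,D}_{{\mathcal S}_e}\to U^Z_{{\mathcal S}_e}$ with $\dim U^{Y,D}_{{\mathcal S}_e}-\dim U^Z_{{\mathcal S}_e}=2(Z^*(A_2)+1)$; one then observes that, after stripping the $[Z]$-factors from the $\iota^!(\beta_j)$ insertions, the integrand is pulled back from $U^Z_{{\mathcal S}_e}$ and has degree exceeding $\dim U^Z_{{\mathcal S}_e}$ exactly when $Z^*(A_2)\ge$ (number of such insertions), which is what positivity and $V\ge l$ guarantee for non-fiber classes. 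Without this obstruction-comparison/fibration input, the ``saturation'' step of your argument has no proof.

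A second, smaller omission: even after restricting to fiber classes you still must evaluate the surviving $(P_Z,Z_\infty)$ factors, i.e.\ show that multiple covers $sF$ with $s\ge 2$, higher tangency, or any $\pi^*\alpha_i$ insertion on that side contribute zero, and that $\langle \iota^!(\beta_{i_1}),\dots,\iota^!(\beta_{i_t})\mid(1,\gamma)\rangle^{Y,D}_F=\int_Z\beta_{i_1}\wedge\cdots\wedge\beta_{i_t}\wedge\gamma$. The paper does this in Proposition \ref{prop3-5}, using Maulik--Pandharipande's reduction of fiber-class relative invariants of $(Y,D)$ to relative invariants of $({\mathbb P}^1,p_1)$ together with the Okounkov--Pandharipande evaluation (Lemma \ref{lemma3-4}); your ``fiberwise computation \dots the integral is $1$'' asserts the answer without argument, and your claim that the $\alpha_i$ ``distribute to the $X$-side'' is itself a consequence of this fiber-class vanishing ($q=0$ in Proposition \ref{prop3-5}(i)), not an a priori fact. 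So the proposal outlines the correct strategy but leaves unproved the two lemmas on which the theorem rests.
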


McDuff \cite{M1} also considered the similar comparison result in
the some special case.  For readers familiar with
Maulik-Pandharipande's relative/absolute correspondence, the above
theorem means that there is no lower order term in the
degeneration formula.

 The second motivation comes from symplectic birational geometry.
A fundamental problem in symplectic geometry is to generalize
birational geometry to symplectic geometry. In the 80's, Mori
introduced a program towards the birational classification of
algebraic manifolds of dimension three and up. In 90's, the last
author \cite{R1} speculated that there should be a symplectic
geometric program.  First of all, since there is no notion of
regular or rational function in symplectic geometry, we must
therefore first define the appropriate notion of symplectic
birational equivalence. For this purpose, in \cite{HLR}, the
authors proposed to use Guillemin-Sternberg's birational cobordism
to replace birational maps. Secondly, we need to study what
geometric properties behave nicely under this birational
cobordism. In \cite{HLR}, the authors defined the uniruledness of
symplecitc manifolds by requiring a nonzero Gromov-Witten
invariant with a point insertion and settled successfully the
fundamental birational cobordism invariance of uniruledness.
McDuff \cite{M2} proved that Hamiltonian $S^1$-manifolds are
uniruled using the techniques from \cite{HLR}.

Furthermore, Tian-Jun Li and the second author \cite{LtjR}
investigate the dichotomy of uniruled symplectic divisors. The
dichotomy asserts that if the normal bundle is non-negative in
some sense, then the ambient manifold is uniruled.

It is clear that our stronger comparison theorem should yield
stronger results. This is indeed the case. As an application of
our comparison theorem, we investigate symplectic rationally
connected manifolds.  Similar to the case of uniruledness (see
\cite{HLR}), we define the notion of $k$-point (strongly) rational
connectedness by requiring a non-zero (primary) Gromov-Witten
invariant with $k$ point insertions (see section five for the
detailed discussion). Of course, one important problem here of
interest is whether the notion of symplectic $k$-point rational
connectedness is invariant under birational cobordism given in
\cite{HLR}. From the blowup formula of \cite{H1, H2, H3, HZ, La},
we know that symplectic rational connectedness is invariant under
the symplectic blowup along points and some special submanifolds
with convex normal bundles. The general case is still unknown. We
should mention that a longstanding problem in Gromov-Witten theory
is to characterize algebraic rationally connectedness in terms of
Gromov-Witten theory. Our second main theorem is the following
theorem analogous to the theorem of McDuff \cite{M3, LtjR} for
uniruled divisors.

\begin{theorem}\label{rcsd-intro}
Let $(X,\omega)$ be a compact $2n$ dimensional symplectic manifold
which contains a submanifold $P$ symplectomorphic to ${\mathbb
P}^{n-1}$ whose normal Chern number  $m\geq 2$, then $X$ is
strongly rationally connected.

\end{theorem}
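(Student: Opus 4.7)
The goal is to exhibit a non-vanishing 2-point Gromov--Witten invariant $\langle [pt]_X, [pt]_X\rangle^X_A$ of $X$, which by the definition in Section~5 establishes strong rational connectedness. My plan is to apply the comparison theorem (Theorem~1.1) to reduce this to a relative invariant of $(X, P)$, and then produce a non-vanishing relative invariant by degenerating to the normal cone and using the accessible geometry of $P_P = \mathbb{P}(N_P \oplus \mathcal{O})$. First I verify that $P$ is a positive divisor: since $H_2(P, \Z) = \Z\langle L\rangle$, the stably effective classes are exactly $\{dL : d \geq 1\}$, and $c_1(N_P)(dL) = dm \geq 2$. Hence $V = m \geq 2$, so Theorem~1.1 is available with $l = 2$.

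Apply Theorem~1.1 with $\mu = 0$ and $\beta_1 = \beta_2 = [pt]_P \in H^{2(n-1)}(P)$. A direct computation gives $\iota^!([pt]_P) = [pt]_X \in H^{2n}(X)$, the point class in $X$. For any $A \in H_2(X, \Z)$ with $P \cdot A \geq 2$, the theorem reads
\[
\langle [pt]_X, [pt]_X\rangle^X_A \;=\; \sum_{\mathcal T} \langle \emptyset \mid \mathcal T\rangle^{X, P}_A.
\]
Since $[pt]_P^2 = 0$ in $H^*(\mathbb{P}^{n-1})$ on degree grounds, the only surviving partition is $\mathcal T_0 = \{(1, [pt]_P), (1, [pt]_P), (1, [P]), \ldots, (1, [P])\}$ with $P \cdot A - 2$ trivial parts, so the sum collapses to a single relative invariant.

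The main task, and the main obstacle, is to select a class $A \in H_2(X, \Z)$ satisfying $c_1(X)(A) = n + 1$ (the dimension condition for a 2-point invariant to be a number), $P \cdot A \geq 2$, and $\langle \emptyset \mid \mathcal T_0\rangle^{X, P}_A \neq 0$. My approach is to degenerate $X \rightsquigarrow X \cup_P P_P$ and use that $P_P$ is a $\mathbb{P}^1$-bundle over the rationally connected base $\mathbb{P}^{n-1}$, hence itself rationally connected with tractable Gromov--Witten theory. I would take $A$ to arise, through the degeneration formula, from a rational curve in $(P_P, P_\infty)$ whose image in $\mathbb{P}^{n-1}$ is a line and which meets $P_\infty$ at the two prescribed points; the non-vanishing then reduces to the elementary fact that two generic points of $\mathbb{P}^{n-1}$ lie on a unique line. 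The delicate steps are matching dimensions across the relative/absolute dictionary, controlling combinatorial multiplicities in the gluing, and ensuring that strata of curves contained in $P$ do not cancel the main contribution; the hypothesis $m \geq 2$ (equivalently $V \geq l = 2$) is precisely what, via Theorem~1.1, eliminates such ``lower-order'' corrections, so no cancellation can occur. Combining the above, $\langle [pt]_X, [pt]_X\rangle^X_A \neq 0$, which establishes the strong rational connectedness of $X$.
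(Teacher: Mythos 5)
Your setup is fine and agrees with the paper's framework: $P\cong{\mathbb P}^{n-1}$ is a positive divisor, the stably effective classes are the multiples of the line class $L$, so $V=m\geq 2$, and Theorem \ref{comparison} applied with $\beta_1=\beta_2=[pt]_P$ collapses (since $[pt]_P^2=0$) to a single relative invariant of $(X,P)$ with relative partition $\{(1,[pt]),(1,[pt]),(1,[P]),\dots\}$. The genuine gap is exactly the step you yourself label ``the main task'': you never establish that any such relative invariant of $(X,P)$ is nonzero, and the mechanism you sketch cannot do it as stated. A rational curve lying in the bundle side $(P_P,P_\infty)$ over a line, meeting $P_\infty$ at two points, contributes to the degeneration formula only through a relative invariant of $(P_P,P_\infty)$ (or, when the whole curve sinks into the bundle, through the term with empty relative partition, which Theorem \ref{divisor-invariant} identifies with an absolute invariant of ${\mathbb P}^{n-1}$); it tells you nothing directly about the $(X,P)$-factor, which is the one you need to be nonzero. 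The paper has to work for this: it degenerates the auxiliary absolute invariant $\langle [pt],[pt],\beta_3\cdot[P],\dots\rangle^X_{\iota_*L}$ in the \emph{minimal} class, uses the vanishing result Proposition \ref{prop3-1} together with the fiber-class computations of Proposition \ref{prop3-5} to show the only surviving terms are (a) $(X,P)$-relative invariants paired with trivial fiber-class factors and (b) the all-in-$Y$ term, which by Theorem \ref{divisor-invariant} equals $\langle[pt],[pt]\rangle^{{\mathbb P}^{n-1}}_L=1$ (this is where ``two points lie on a unique line'' actually enters); hence either the auxiliary invariant is already nonzero or the \emph{sum} of $(X,P)$-relative invariants is nonzero. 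Even then no individual term is known to be nonzero, so the paper takes the minimal nonzero $\mu_0$ with respect to the partial order of Definition \ref{order} and applies Theorem \ref{comparison} once more with insertions dictated by $\mu_0$; that is how cancellation is excluded — not by $m\geq 2$ alone, which only guarantees $V\geq 2$ so that two point classes may be treated as $P$-insertions. None of these steps (the dichotomy, the nonvanishing theorem for the all-in-$Y$ term, the minimality argument) appear in workable form in your proposal.

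A second, concrete obstruction: you insist on the pure two-point invariant $\langle[pt]_X,[pt]_X\rangle^X_A$, which forces $c_1(X)(A)=n+1$. For the natural class $A=\iota_*L$ produced by your ``line'' geometry one has $c_1(X)(\iota_*L)=n+m\geq n+2$, so this invariant vanishes for dimension reasons precisely when $m\geq 2$, and nothing in the hypotheses provides another class $A$ with $c_1(X)(A)=n+1$, $P\cdot A\geq 2$ and nonzero invariant. The definition of strong rational connectedness only requires some nonzero primary invariant with two point insertions, and the paper's output is accordingly of the form $\langle[pt],[pt],\gamma_1\cdot[P],\dots,\gamma_q\cdot[P]\rangle^X_{\iota_*L}\neq 0$, where the extra insertions absorb the excess dimension; your stronger goal is both unnecessary and, in the class your construction suggests, impossible.
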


The paper is organized as follows. In section two, we first review
Gromov-Witten theory and its degeneration formula to set up the
notation. In Section three, we prove some vanishing and
non-vanishing results for relative Gromov-Witten invariants of
${\mathbb P}^1$-bundles. In section four, we prove a comparison
theorem between absolute and relative Gromov-Witten invariants. In
section five, we generalize the {\it from divisor to ambient
space} inductive construction of \cite{LtjR} to the case $k$-point
rational connectedness.

\bigskip\noindent
{\bf Acknowledgements:} Both of the authors would like to thank
Zhenbo Qin who took part in the early time of this work. This
article was prepared during the first author's visit to the
Department of Mathematics, University of Michigan-Ann Arbor and
the Department of Mathematics, University of Missouri-Columbia.
The first author is grateful to the departments for their
hospitality. The authors would like to thank Tian-Jun Li for his
valuable suggestions and telling us about McDuff's work. Thanks
also to A. Greenspoon for his editorial assistance.

\section{Preliminaries}

In this section, we want to review briefly the constructions of
virtual integration in the definitions of the absolute and
relative GW invariants, which are the main tool of our paper. We
refer to \cite{R1,LR} for the details.

\subsection{GW-invariants}\label{gw}

Suppose that $(X,\omega)$ is a compact symplectic manifold  and
$J$ is a tamed almost complex structure.

\begin{definition} \label{stable map}A stable $J-$holomorphic map
is an equivalence class of pairs $(\Sigma, f)$. Here $\Sigma$ is a
connected nodal marked Riemann surface with arithmetic genus $g$,
$k$ smooth marked points $x_1, ..., x_k$, and $f : \Sigma
\longrightarrow X$ is a continuous map whose restriction to each
component of $\Sigma$ (called a component of $f$ in short) is
$J$-holomorphic. Furthermore, it satisfies the stability
condition: if $f|_{S^2}$ is constant (called a ghost bubble) for
some $S^2-$component, then the  $S^2-$component has at least three
special points (marked points or nodal points). $(\Sigma, f)$,
$(\Sigma', f')$ are equivalent, or $(\Sigma,f) \sim (\Sigma',
f')$, if there is a biholomorphic map $h : \Sigma' \longrightarrow
\Sigma$ such that $f'=f\circ h$.
\end{definition}

An essential feature of Definition \ref{stable map} is that, for a
stable $J-$holomorphic map $(\Sigma, f)$, the automorphism  group
$$
   \mbox{Aut}(\Sigma, f) = \{ h\mid h\circ (\Sigma, f) = (\Sigma, f)\}
$$
is finite. We define the moduli space $\overline{\mathcal
M}^X_A(g,k, J)$ to be the set of equivalence classes of stable
$J-$holomorphic maps such that $[f] = f_*[\Sigma] =A\in H_2(X,{\bf
Z})$. The virtual dimension of $\overline{\mathcal M}^X_A(g,k,J)$
is computed by index theory,
$$
\mbox{virdim}_{\mathbb R}\overline{\mathcal M}^X_A(g,k,J) =
2c_1(A) + 2(n-3)(1-g) +2k,
$$
where $n$ is the complex dimension of $X$.

Unfortunately, $\overline{\mathcal M}^X_A(g,k,J)$ is highly
singular and may have larger dimension than the virtual dimension.
To extract invariants, we use the following virtual neighborhood
method.

First, we drop the $J$-holomorphic condition from the previous
definition and require only that each component of $f$ be smooth.
We call the resulting object a stable map or a $C^\infty$-stable
map. Denote the corresponding space of equivalence classes by
$\overline{\mathcal B}^X_A(g,k,J)$. $\overline{\mathcal
B}^X_A(g,k,J)$ is clearly an infinite dimensional space. It has a
natural stratification given by the topological type of $\Sigma$
together with the fundamental classes of the components of  $f$.
The stability condition ensures that $\overline{\mathcal
B}^X_A(g,k,J)$ has only finitely many strata such that each
stratum is a Fr\'echet orbifold. Further, one can use the
pregluing construction to define a topology on $\overline{\mathcal
B}^X_A(g,k,J)$ which is Hausdorff and makes $\overline{\mathcal
M}^X_A(g,k,J)$ a compact subspace. (see \cite{R1}).

We can define another infinite dimensional space $\Omega^{0,1}$
together with a map
$$
  \pi : \Omega ^{0,1} \longrightarrow \overline{\mathcal
B}^X_A(g,k,J)
$$
such that the fiber is $\pi^{-1}(\Sigma,f) = \Omega ^{0,1}
(f^*TX)$. The Cauchy-Riemann operator is now interpreted as a
section of $ \pi : \Omega ^{0,1} \longrightarrow
\overline{\mathcal B}^X_A(g,k,J)$,
$$\overline{\partial}_J:\overline{\mathcal B}^X_A(g,k,J)\to \Omega ^{0,1}$$
with $\overline{\partial}_J^{-1}(0)$  nothing but
$\overline{\mathcal M}^X_A(g,k,J)$.

 At each $(\Sigma,f) \in
\overline{\mathcal M}^X_A(g,k,J)$, there is a canonical
decomposition of the tangent space of $\Omega^{0,1}$ into the
horizontal piece and the vertical piece. Thus we can linearize
$\overline{\partial}_J$ with respect to deformations of stable
maps and project to the vertical piece to obtain an elliptic
complex
\begin{equation}\label{virtual-0}
    L_{\Sigma,f}: \Omega^0(f^*TX) \longrightarrow
    \Omega^{0,1}(f^*TX).
\end{equation}

Several explanations are in order  for  formula
$(\ref{virtual-0})$. Choose a compatible Riemannian  metric on $X$
and let $\nabla$ be the Levi-Civita connection.

When $\Sigma$ is irreducible, $\nabla$ induces a connection on
$f^*TX$, still denoted by $\nabla$. Then $L_{\Sigma,f} =
\overline{\nabla}$, where $\overline{\nabla}$ is the projection of
$\nabla$ onto the $(0,1)$-factor.

When $\Sigma$ is reducible, formula (\ref{virtual-0}) is
interpreted as follows. For simplicity, suppose that $\Sigma$ is
the union of $\Sigma_1$ and  $\Sigma_2$ intersecting at $p\in
\Sigma_1$ and  $q\in \Sigma_2$. Let the corresponding maps be
$f_1$ and  $f_2$. Then define
$$
\Omega^0(f^*TX) = \{(v_1,v_2)\in \Omega^0(f^*_1TX) \times
\Omega^0(f^*_2TX) \mid v_1(p) = v_2(q)\}
$$
and
$$
\Omega^{0,1}(f^*TX) = \Omega^{0,1}(f^*_1TX)\oplus
\Omega^{0,1}(f^*_2TX).
$$
$L_{\Sigma,f}$ is then the restriction of $L_{\Sigma_1,f_1}\oplus
L_{\Sigma_2, f_2}$ to $\Omega^0$. It is clear that
$$
\mbox{Ker} L_{\Sigma,f} = \{ (v_1,v_2)\in \mbox{Ker}\,
L_{\Sigma_1, f_1} \times \mbox{Ker}\, L_{\Sigma_2, f_2}\mid v_1(p)
= v_2(q)\}.
$$
To understand $\mbox{Coker}\, L_{\Sigma,f}$ geometrically, it is
convenient to use another elliptic complex. The idea is as
follows. We would like to drop the condition $v_1(p) = v_2(q)$. To
keep the
 index unchanged, we need to enlarge $\Omega^{0,1}(f^*TX)$. A standard method
motivated by algebraic geometry is to allow a simple pole at the
intersection point. This leads to
$$
 \tilde{L}_{\Sigma,f}:
\tilde{\Omega}^0(f^*TX)\longrightarrow
\tilde{\Omega}^{0,1}(f^*TX),
$$
where
\begin{eqnarray}
  \tilde{\Omega}^0 (f^*TX)&= &\Omega^0(f_1^*TX) \times \Omega^0(f_2^*TX),\nonumber \\
\tilde{\Omega}^{0,1}(f^*TX) & = & \{(v_1,v_2)\in
\Omega^{0,1}(f_1^*TX\otimes {\mathcal O}(p))\times
\Omega^{0,1}(f_1^*TX\otimes {\mathcal O}(q)) \mid \nonumber \\
 & & \mbox{Res}_p v_1 + \mbox{Res}_q v_2 =0\}\nonumber,
\end{eqnarray}
and
$$
  \tilde{L}_{\Sigma,f} = \tilde{L}_{\Sigma_1, f_1} \oplus
  \tilde{L}_{\Sigma_2,f_2}.
$$
It is well-known that
$$
  \mbox{Ker}\, L_{\Sigma,f} \cong \mbox{Ker}\, \tilde{L}_{\Sigma,f}, \hspace{1 cm}
  \mbox{Coker}\, L_{\Sigma,f} \cong \mbox{Coker}\, \tilde{L}_{\Sigma, f}.
$$
Therefore
\begin{eqnarray}
\mbox{Coker}\, L_{\Sigma,f} = & &\{ (v_1,v_2)\in \mbox{Coker}\,
\tilde{L}_{\Sigma_1,f_1} \times \mbox{Coker}\,
\tilde{L}_{\Sigma_2,f_2} \mid \nonumber \\
& &\mbox{Res}_p v_1 + \mbox{Res}_q v_2 = 0\}.\nonumber
\end{eqnarray}

If $\Sigma$ has more than two components, the construction above
extends in a straightforward fashion.

To consider the full linearization of $\overline{\partial}_J$, we
have to include the deformation space $Def(\Sigma)$ of the nodal
marked Riemann surface $\Sigma$. $Def(\Sigma)$ fits into  the
short exact sequence,
$$
  0 \longrightarrow H^1(T\Sigma) \longrightarrow Def(\Sigma)
  \longrightarrow T_p\Sigma_1\otimes T_q\Sigma_2 \longrightarrow
  0,
$$
where the first term represents the deformation space of $\Sigma$
preserving the nodal point and the third term represents the
smoothing of the nodal point. Moreover, $H^1(T\Sigma)$ is a
product, with each factor being the deformation space of a
component while treating  the nodal point as a new marked point.

The full linearization of $\overline{\partial}_J$ is given by
$L_{\Sigma,f}\oplus \frac{1}{2}Jdf$. Denote
\begin{eqnarray}
 Def(\Sigma,f) &&= \mbox{Ker} (L_{\Sigma,f}\oplus \frac{1}{2}
Jdf)/T_e\mbox{Aut}(\Sigma),\nonumber\\
Obs(\Sigma, f) &&= \mbox{Coker} (L_{\Sigma, f} \oplus
\frac{1}{2}Jdf)\nonumber.
\end{eqnarray}
When $Obs(\Sigma, f) = 0$, $(\Sigma,f)$ is a smooth point of the
moduli space and $Def(\Sigma,f)$ is its tangent space.

 Now we choose a nearby symplectic form $\omega'$ such that
$\omega'$ is tamed with $J$ and $[\omega']$ is a rational
cohomology class. Using $\omega'$, B. Siebert \cite{S1} (see also
the appendix in \cite{R1}) constructed a natural finite
dimensional vector bundle over $\overline{\mathcal B}_A(g,k,J)$.
It has  the property of dominating any local finite dimensional
orbifold bundle as follows. Let $U$ be a neighborhood  of
$(\Sigma, f)\in \overline{\mathcal B}_A(g,k,J)$ and
 $F_U$ be an orbifold bundle over $U$. Then Siebert
constructed a bundle $E$ over $\overline{\mathcal B}_A(g,k,J)$
such that there is a surjective bundle map $E|_U\longrightarrow
F_U$.

For each $(\Sigma, f)\in \overline{\mathcal M}_A(g,k,J)$,
$Obs(\Sigma,f)$ extends to a local orbifold bundle $F(\Sigma,f)$
over a neighborhood $\tilde{\mathcal U}_{\Sigma,f}$ of
$(\Sigma,f)$. Then we use Siebert's construction to find a global
orbifold bundle ${\mathcal E}(\Sigma,f)$ dominating $F(\Sigma,f)$.
In fact, any global orbifold bundle with this property will work.
We also remark that it is often convenient to replace
$Obs(\Sigma,f)$ by $\mbox{Coker}\, L_{\Sigma,f}$ in the
construction. Over each $\tilde{\mathcal U}_{\Sigma, f}$, by the
domination property, we can construct a stabilizing term
$\eta_{\Sigma,f}: {\mathcal E}(\Sigma,f)\longrightarrow
\Omega^{0,1}$ supported in $\tilde{\mathcal U}_{\Sigma,f}$ such
that $\eta_{\Sigma,f}$ is surjective onto $Obs(\Sigma,f)$ at
$(\Sigma,f)$. Obviously, $\eta_{\Sigma,f}$ can be viewed as a map
from ${\mathcal E}(\Sigma,f)$ to $\Omega^{0,1}$. Then the
stabilizing equation
$$\overline{\partial}_J + \eta_{\Sigma,f}:{\mathcal E}(\Sigma, f)\to \Omega^{0,1},
\quad ((\Sigma', f'), e)\to \overline{\partial}_J
f'+\eta_{\Sigma,f}(e)$$ has no cokernel at $(\Sigma,f)$. By
semicontinuity, it has no cokernel in a neighborhood ${\mathcal
U}_{\Sigma,f} \subset \tilde{\mathcal U}_{\Sigma,f}$.

Since $\overline{\mathcal M}^X_A(g,k,J)$ is compact, there are
finitely many ${\mathcal U}_{\gamma} = {\mathcal
U}_{\Sigma_{\gamma},f_{\gamma}}$ covering $\overline{\mathcal
M}^X_A(g,k,J)$. Let
$$
 {\mathcal U} = \cup_{\gamma} {\mathcal U}_{\gamma}, \hspace{0.5 cm} {\mathcal E} =\oplus_{\gamma}
 {\mathcal E}(\Sigma_{\gamma},f_{\gamma}), \hspace{0.5 cm} \eta = \sum_{\gamma}
 \eta_{\Sigma_{\gamma},f_{\gamma}}.
$$
Consider  the finite dimensional vector bundle  over ${\mathcal
U}$, $p: {\mathcal E}|_{\mathcal U} \longrightarrow {\mathcal U}$.
The stabilizing equation $\overline{\partial}_J + \eta$ can be
interpreted as a section of the bundle $p^*\Omega^{0,1}\to
{\mathcal E}|_{\mathcal U}$. By construction, this section
$$
\overline{\partial}_J + \eta: {\mathcal E}|_{\mathcal U}\to
p^*\Omega^{0,1}$$ is transverse to the zero section of
$p^*\Omega^{0,1}\to {\mathcal E}|_{\mathcal U}$.

The set $U^X_{{\mathcal S}_e} = (\overline{\partial}_J
+\eta)^{-1}(0)$    is called the virtual neighborhood in [R1]. The
heart of \cite{R1} is to show that $U^X_{{\mathcal S}_e} $ has the
structure of a $C^1-$manifold.

Notice that $U^X_{{\mathcal S}_e} \subset {\mathcal E}|_{\mathcal
U}$.
 Over  $U^X_{{\mathcal S}_e}$ there is the tautological bundle
 $${\mathcal E}_X =
p^*({\mathcal E}|_U)|_{U^X_{{\mathcal S}_e}}. $$ It comes with the
tautological inclusion map
$$
  S_X: U^X_{{\mathcal S}_e} \longrightarrow {\mathcal E}_X, \quad ((\Sigma', f'), e)\to e,
$$
which can be viewed as a section of ${\mathcal E}_X$. It is easy
to check that
$$S^{-1}_X(0) = \overline{\mathcal M}^X_A(g,k,J).$$
Furthermore, one can show that $S_X$ is a proper section.

Note that the stratification of $\overline{\mathcal B}_A(g,k,J)$
induces a natural stratification of ${\mathcal E}$. We can define
$\eta_{\gamma} = \eta_{\Sigma_{\gamma},f_{\gamma}}$ inductively
from lower stratum to higher stratum. For example, we can first
define $\eta_{\gamma}$ on a stratum and extend to a neighborhood.
Then we define $\eta_{\gamma+1}$ at the next stratum supported
away from lower strata. One consequence of this construction is
that $U^X_{{\mathcal S}_e}$ has the same stratification as that of
$\mathcal E$. Namely, if ${\mathcal B}_{D'}\subset
\overline{\mathcal B}_D$ is a lower stratum,
$$
  U^X_{{\mathcal S}_e}\cap {\mathcal E}|_{{\mathcal B}_{D'}}\subset
  U^X_{{\mathcal S}_e}\cap{\mathcal E}|_{\overline{\mathcal B}_D}
$$
is a submanifold of codimension at least $2$.

There are evaluation maps
$$
  ev_i: \overline{\mathcal B}_A(g,k,J)\longrightarrow X, \quad
  (\Sigma, f) \to f(x_i),
$$
for $1\leq i\leq k$. $ev_i$ induces a natural map from
$U_{{\mathcal S}_e}\longrightarrow X^k$, which can be shown to be
smooth.

Let $\Theta$ be the Thom form of the finite dimensional bundle
${\mathcal E}_X\to U^X_{{\mathcal S}_e}$.

\begin{definition} The (primary) Gromov-Witten invariant is
defined as
$$
  \langle\alpha_1,\cdots,\alpha_k\rangle^X_{g,A} = \int_{U_{{\mathcal
  S}_e}}S_X^*\Theta\wedge\Pi_i ev_i^*\alpha_i,
$$
where $\alpha_i\in H^*(X;{\mathbb R})$. For the genus zero case,
we also write $\langle\alpha_1,\cdots,\alpha_k\rangle^X_A=\\
\langle\alpha_1,\cdots,\alpha_k\rangle^X_{0,A}$.
\end{definition}

\begin{definition} For each marked point $x_i$, we define an orbifold complex line
bundle ${\mathcal L}_i$ over $\overline{\mathcal B}^X_A(g,k,J)$
whose fiber is $T_{x_i}^*\Sigma$ at $(\Sigma,f)$. Such a line
bundle can be pulled back to $U^X_{{\mathcal S}_e}$ (still denoted
by ${\mathcal L}_i$). Denote $c_1({\mathcal L}_i)$, the first
Chern class of ${\mathcal L}_i$, by $\psi_i$.
\end{definition}

\begin{definition} The descendent Gromov-Witten invariant is
defined as
$$
  \langle\tau_{d_1}\alpha_1,\cdots, \tau_{d_k}\alpha_k\rangle^X_{g,A}  =
 \int_{U^X_{{\mathcal S}_e}}S_X^*\Theta\wedge \Pi_i\psi_i^{d_i}\wedge ev_i^*\alpha_i,
$$
where $\alpha_i\in H^*(X;{\mathbb R})$.
\end{definition}

\begin{remark}
In the stable range $2g +k \geq 3$, one can also define
non-primary GW invariants (See e.g. \cite{R1}). Recall that there
is a map $\pi: \overline{\mathcal B}^X_A(g,k,J)\rightarrow
\overline{\mathcal M}_{g,k}$ contracting the unstable components
of the source Riemann surface. We can introduce a class $\kappa$
from the Deligne-Mumford space via $\pi$ to define  the ancestor
GW invariants
$$
  \langle \kappa\mid \Pi_i\alpha_i\rangle^X_{g,A} =
 \int_{U^X_{{\mathcal S}_e}}S_X^*\Theta\wedge \pi^*\kappa\wedge \Pi_i ev_i^*\alpha_i.
$$
The primary Gromov-Witten invariants are the special invariants
with the point class in $\overline{\mathcal M}_{0,k}$.
\end{remark}

\begin{remark}\label{D} For computational purpose we would
mention the following variation of the virtual neighborhood
construction.
 Suppose $\iota: D\subset X$ is a submanifold. For $\alpha  \in H^*(D;
{\mathbb R})$ we define  $\iota^!(\alpha) \in H^*(X;{\mathbb R})$
via the transfer map $\iota^! = PD_X\circ \iota_* \circ PD_D$. One
can construct Gromov-Witten invariants with an insertion of the
form $\iota^!(\alpha)$ as follows. Apply the virtual neighborhood
construction to the compact subspace
$$\overline{\mathcal M}_A(g,k,J)\cap ev_1^{-1}(D)$$
in $ \overline{\mathcal B}^X_A(g,k,J,D) = ev_1^{-1}(D)$  to obtain
a virtual neighborhood ${\mathcal U}_{{\mathcal S}_e}(D)$ together
with the natural map $ev_D : {\mathcal U}_{{\mathcal S}_e}(D)
\longrightarrow D$. It is easy to show that
\begin{eqnarray*}
  \langle \tau_{d_1}\iota^!(\alpha), \tau_{d_2}\beta_2,\cdots,
  \tau_{d_k}\beta_k\rangle^X_{g,A}
    =  \int_{U_{{\mathcal
 S}_e}(D)}S^*\Theta\wedge ev_D^*\alpha\wedge \prod_{i=2}^k \psi_i^{d_i}ev_i^* \beta_i
  .
\end{eqnarray*}
\end{remark}

\begin{remark} \label{absolute graph}
        For each $ \langle\tau_{d_1}\alpha_1,\cdots, \tau_{d_k}\alpha_k\rangle^X_{g,A}$, we can conveniently associate a simple graph $\Gamma$ of one vertex decorated by
        $(g,A)$ and   a tail for each marked point. We then further decorate
        each tail by $(d_i, \alpha_i)$ and call the  resulting graph $\Gamma(\{(d_i,\alpha_i)\})$ {\em a weighted
        graph}. Using the weighted graph
        notation, we denote the above invariant by $\langle \Gamma(\{(d_i, \alpha_i)\})\rangle^X$.
        We can also consider the
       disjoint union $\Gamma^{\bullet}$ of several such graphs  and use $A_{\Gamma^{\bullet}},
       g_{\Gamma^{\bullet}}$ to denote the total homology class and total arithmetic genus.
       Here the total arithmetic genus is $1+\sum (g_i-1)$.  Then,
       we define $\langle \Gamma^{\bullet}(\{(d_i, \alpha_i)\})\rangle^X$ as the  product
       of Gromov-Witten invariants of the connected components.
       \end{remark}

\subsection{Relative GW-invariants}\label{rgw}

In this section, we will review the relative GW-invariants.
 The readers can
find more details in the reference \cite{LR}.

Let $Z\subset X$ be a real codimension $2$ symplectic submanifold.
Suppose that $J$ is an $\omega-$tamed almost complex structure on
$X$ preserving $TZ$, i.e. making $Z$ an almost complex
submanifold. The relative Gromov-Witten invariants are defined by
counting the number of stable $J-$holomorphic maps intersecting
$Z$ at finitely many points with prescribed tangency. More
precisely, fix a $k$-tuple $T_k=(t_1, \cdots, t_k)$ of positive
integers, consider a marked pre-stable curve
$$
(C,x_1, \cdots, x_m, y_1, \cdots, y_k)
$$
and stable $J-$holomorphic map $
 f: C\longrightarrow X
$ such that the divisor $f^*Z$ is
$$
f^*Z = \sum_i t_i y_i.
$$
One would like to consider the moduli space of such curves and
apply the virtual neighborhood technique to construct the relative
invariants. But this scheme needs modification as the moduli space
is not compact. It is true that for a sequence of $J-$holomorphic
maps $(\Sigma_n,f_n)$ as above, by possibly passing to a
subsequence, $f_n$ will still converge to a stable $J$-holomorphic
map $(\Sigma,f)$. However the limit $(\Sigma,f)$ may have some
$Z-$components, i.e. components whose images under $f$ lie
entirely in $Z$.

To deal with this problem  the authors in \cite{LR} adopt  the
open cylinder model.  Choose a Hamiltonian $S^1$ function $H$ in a
closed  $\epsilon-$symplectic tubular neighborhood $X_0$ of $Z$
with $H(X_0)=[-\epsilon, 0]$ and $Z=H^{-1}(-\epsilon)$. Next we
need
 to choose an almost complex structure with nice properties near $Z$.
An almost complex structure $J$ on $X$ is said to be tamed
relative to $Z$  if $J$ is $\omega$-tamed, $S^1-$invariant for
some $(X_0, H)$, and  such that $Z$ is an almost complex
submanifold. The set of such $J$ is nonempty and forms a
contractible space. With such a choice of almost complex
structure, $X_0$ can be viewed as a neighborhood of the zero
section of the complex line bundle $N_{Z|X}$ with the $S^1$ action
given by  the complex multiplication $e^{2\pi i \theta}$. Now we
remove $Z$. The end of $X-Z$ is simply $X_0-Z$. Recall that the
punctured disc $D-\{0\}$ is biholomorphic to the half cylinder
$S^1\times [0, \infty)$. Therefore, as an almost complex manifold,
$X_0-Z$ can be viewed as the translation invariant almost complex
half cylinder $P\times  [0, \infty)$ where $P=H^{-1}(0)$. In this
sense, $X-Z$ is viewed as a manifold with almost complex cylinder
end.

Now we consider a holomorphic map in the cylinder model where the
marked points mapped into $Z$ are removed from the domain surface.
Again we can view a punctured neighborhood of each of these marked
points as a half cylinder $S^1 \times [0,\infty)$. With such a
$J$, a $J-$holomorphic map of $X$ intersecting $Z$ at finitely
many points then exactly corresponds to a $J-$holomorphic map to
the open manifold $X-Z$ from a punctured Riemann surface which
converges to (a multiple of) an $S^1-$orbit at a puncture point.

Now we reconsider the convergence of $(\Sigma_n,f_n)$ in the
cylinder model. The creation of a $Z-$component $f_\nu$
corresponds to disappearance of a part of im$(f_n)$ to infinity.
We can use translation to rescale back the missing part of
im$(f_n)$. In the limit, we may obtain a stable map
$\tilde{f}_\nu$ into $P\times {\mathbb R}$. When we obtain $X$
from the cylinder model, we need to collapse the $S^1$-action at
infinity. Therefore, in the limit, we need to take into account
maps into the closure of $P\times {\mathbb R}$. Let $Y$ be the
projective completion of the normal bundle $N_{Z|X}$, i.e. $Y =
{\mathbb P}(N_{Z|X}\oplus {\mathbb C})$. Then $Y$ has a zero
section $Z_0$ and an infinity section $Z_\infty$. We view
$Z\subset X$ as the zero section.   One can further show that
$\tilde{f}_\nu$ is indeed  a stable map into $Y$ with the
stability specified below.

To form a compact moduli space of such maps we thus must allow the
target $X$ to degenerate as well (compare with \cite{Li1}). For
any non-negative integer $m$, construct $Y_m$ by gluing together
$m$ copies of $Y$, where the infinity section of the $i^{th}$
component is glued to the zero section of the $(i+1)^{st}$
component for $1\leq i \leq m$. Denote the zero section of the
$i^{th}$ component by $Z_{i-1}$, and the infinity section by
$Z_i$, so Sing $Y_m = \cup_{i=1}^{m-1} Z_i$. We will also denote
$Z_m$ by $Z_\infty$ if there is no possible confusion. Define
$X_m$ by gluing $ X $ along $Z$ to $Y_m$ along $Z_0$. Thus Sing $
X_m = \cup_{i=0}^{m-1}Z_i$ and $X_0 = X$. $X_0=X$ will be referred
to as the root component and the other irreducible components will
be called the bubble components. Let $\mbox{Aut}_Z Y_m$ be the
group of automorphisms of $Y_m$ preserving $Z_0$, $Z_m$, and the
morphism to $Z$.  And let $\mbox{Aut}_ZX_m$ be the group of
automorphisms of $X_m$ preserving $X$ (and $Z$) and with
restriction to $Y_m$ being contained in $\mbox{Aut}_ZY_m$ (so
$\mbox{Aut}_ZX_m = \mbox{Aut}_ZY_m\cong ({\mathbb C}^*)^m$, where
each factor of $({\mathbb C}^*)^m$ dilates the fibers of the
${\mathbb P}^1-$bundle $Y_i\longrightarrow Z_i$). Denote by
$\pi[m] : X_m\longrightarrow X$ the map which is the identity on
the root component $X_0$ and contracts all the bubble components
to $Z_0$ via the  fiber bundle projections.

Now consider a nodal curve $C$ mapped into $X_m$ by
$f:C\longrightarrow X_m$ with specified tangency to $Z$.  There
are two types of marked points:

(i) absolute marked points whose images under $f$ lie outside $Z$,
labeled by $x_i$,

(ii) relative marked points which are mapped into $Z$ by $f$,
labeled by $y_j$.

A relative $J-$holomorphic map $f:C\longrightarrow X_m$ is said to
be pre-deformable if $f^{-1}(Z_i)$ consists of a union of nodes
such that for each node $p\in f^{-1}(Z_i), i=1,2, \cdots, m$, the
two branches at the node are mapped to different irreducible
components of $X_m$ and the orders of contact to $Z_i$ are equal.

An isomorphism of two such $J-$holomorphic maps $f$ and $ f'$ to
$X_m$ consists of a diagram
$$
\begin{array}{ccc}
     (C,x_1, \cdots, x_l, y_1, \cdots, y_k) &
     \stackrel{f}{\longrightarrow} &  X_m\\
     & &\\
     h\downarrow &  & \downarrow t\\
     & &\\
     (C',x'_1, \cdots, x'_l, y'_1, \cdots, y'_k) &
     \stackrel{f'}{\longrightarrow} &  X_m
\end{array}
$$
where $h$ is an isomorphism of marked curves and $t\in
\mbox{Aut}_Z(X_m)$. With the preceding understood, a relative
$J-$holomorphic map to $X_m$ is said to be stable if it has only
finitely many automorphisms.

We introduced the notion of a weighted graph in Remark
\ref{absolute graph}. We need to refine it for relative stable
maps to $(X,Z)$. A (connected) relative graph $\Gamma$ consists of
the following data:

(1) a vertex decorated by $A\in H_2(X;\mathbb Z)$ and genus $g$,

(2) a tail for each absolute marked point,

(3) a relative tail for each relative marked point.

\begin{definition} Let $\Gamma$ be a relative graph
with $k$ (ordered) relative tails  and $T_k = (t_1, \cdots, t_k)$,
a $k-$tuple of positive integers forming a
 partition of $A\cdot Z$.
 A relative
$J-$holomorphic map to $(X, Z)$ with  type $(\Gamma, T_k)$
consists of a marked curve $(C, x_1, \cdots, x_l, y_1, \cdots,
y_k)$ and a map $f: C\longrightarrow X_m$ for some non-negative
integer $m$ such that

(i) $C$ is a connected curve (possibly reducible) of arithmetic
genus $g$,

(ii) the map
$$
   \pi_m\circ f : C \longrightarrow X_m \longrightarrow X
$$
satisfies $(\pi_m\circ f)_*[C] = A$,

(iii) the  $x_i, 1\leq i \leq l$, are the absolute marked points,

(iv) the $y_i, 1\leq i \leq k$, are the relative marked points,

(v) $f^*Z_m = \sum_{i=1}^k t_i y_i$.

\end{definition}

Let $\overline{\mathcal M}_{\Gamma, T_k}(X,Z,J)$ be the moduli
space of pre-deformable relative stable $J-$holomorphic maps with
type $(\Gamma, T_k)$. Notice that for an element $f:C\to X_m$ in
$\overline{\mathcal M}_{\Gamma, T_k}(X,Z,J)$ the intersection
pattern with $Z_0, ..., Z_{m-1}$ is only constrained by the genus
condition and the pre-deformability condition.

Now we apply the virtual neighborhood technique to construct
$U^{X,Z}_{{\mathcal S}_e}$, $\mathcal E_{X,Z}$, $S_{X,Z}$ as in
section \ref{gw}. Consider the configuration space
$\overline{\mathcal B}_\Gamma(X,Z,J)$ of equivalence classes of
smooth pre-deformable relative stable maps. Here we still take the
equivalence class under ${\mathbb C}^*$-action on the fibers of
${\mathbb P}(N_{Z|X}\oplus {\mathbb C})$. In particular, the
subgroup of ${\mathbb C}^*$ fixing such a map is required to be
finite. The maps are required to intersect the $Z_i$ only at
finitely many points in the domain curve. Further, at these
points, the map is required to have a holomorphic leading term  in
the normal Taylor expansion for any local chart of $X$ taking $Z$
to a coordinate hyperplane and being holomorphic in the normal
direction along $Z$. Thus the notion of contact order still makes
sense, and we can still impose the
 pre-deformability condition and contact order condition at the $y_i$
 being  governed by $T_k$.

Next, we can define $\Omega^1$ similarly. We also need to
understand the  $Obs$ space. The discussion is similar to that of
stable maps.

According to its label, a relative stable map is naturally divided
into components of two types:

(i) a stable map in $X$ intersecting $Z$ transversely, called a
rigid factor;

(ii) a stable map in ${\mathbb P}(N_{Z|X}\oplus {\mathbb C})$ such
that its projection to $Z$ is stable and it intersects $Z_0$,
$Z_\infty$ transversely, called a  rubber factor.

For each component $(\Sigma,f)$, we also have the extra condition
that $f$ intersects $Z_0$ or $Z_\infty$ with an order fixed by the
graph.

Suppose that $f(y_i)\in Z_0$ or $Z_\infty$ with order $t_i$. The
analog of (\ref{virtual-0}) is
$$
  L^{X,Z}_{\Sigma, f} : \Omega^0_r \longrightarrow \Omega^{0,1}_r.
$$
Here an element $u\in \Omega^0_r$ is an element of
$\Omega^0(f^*TX)$ or $\Omega^0(f^*T{\mathbb P}(N_{Z|X}\oplus
{\mathbb C}))$ with {\bf the following property:} Choose a unitary
connection on $N$ so that we can decompose the tangent bundle of
${\mathbb P}(N_{Z|X}\oplus {\mathbb C})$  into tangent and normal
directions. Near $f(y_i)$, $u(y_i)$ can be decomposed into $(u_Z,
u_N)$ where $u_Z$, $u_N$ are tangent and normal components,
respectively. Now we require that $u_N$ vanish at $y_i$ with order
$t_i$. When $\Sigma$  consists of  two components joined at one
point, we require their $u_Z$ components  be the same at the
intersection point.

We can also consider the analog of $\tilde{L}_{\Sigma,f}$,
$$
  \tilde{L}^{X,Z}_{\Sigma,f}\oplus \oplus _i T_{t_i} :
  \tilde{\Omega}^0_r \longrightarrow \tilde{\Omega}^{0,1}_r \oplus
  \oplus_i{\mathcal J}^{t_i}_i.
$$
Here $ \tilde{\Omega}^0_r = \{ u \in \Omega^0\mid u_N(y_i) = 0\}.
$ Also an element $v\in\tilde \Omega^{0,1}_r$ is required to have
a simple pole at each $y_i$ such that $\mbox{Res}_{y_i}v \in TZ$,
and  if  two components are joined together, we require that the
sum of the residues be zero. Each summand
$$
{\mathcal J}^{t_i}_i \cong \oplus^{t_i
-1}_{j=1}\mbox{Hom}((T_{y_i}\Sigma)^j, N_{f(y_i)})
$$
is the $(t_i-1)$-jet space, and the map $T_{t_i}(f)$ is the
$(t_i-1)$-jet of $f$ at $y_i$, i.e. the first $(t_i-1)$ terms of
the Taylor polynomial.

It is clear that  $\mbox{Coker}\, \tilde{L}^{X,Z}_\Sigma$ has a
similar description as $\mbox{Coker}\, \tilde{L}^X_\Sigma$, the
only difference being that we require the residue at each nodal
point be in $TZ$. Moreover,
$$
 \tilde{L}^{X,Z}_{\Sigma,f} \oplus \oplus_i T_{t_i} =
 \tilde{L}^{X,Z}_{\Sigma,f}\oplus \oplus_i
 T_{t_i}\mid_{\mbox{Ker}\,\tilde{L}^{X,Z}_{\Sigma,f}}.
$$

Finally the process of  adding the deformation of a nodal Riemann
surface is identical.

In addition to the  evaluation maps on $\overline{\mathcal
B}_{\Gamma, T_k}(X,Z,J)$,
$$\begin{array}{lllll}
 ev^X_i: &\overline{\mathcal B}_{\Gamma, T_k}(X,Z,J)& \longrightarrow &X, &\quad 1\leq i\leq l,\\
&(\Sigma, x_1,\cdots, x_l,y_1,\cdots, y_k,f)&\longrightarrow
&f(x_i),&
\end{array}
$$
there are also the evaluations maps
$$\begin{array}{lllll}
  ev^Z_j: &\overline{\mathcal B}_{\Gamma, T_k}(X,Z,J) &\longrightarrow& Z, &\quad  1\leq j\leq
  k,\\
  &(\Sigma, x_1,\cdots, x_l,y_1,\cdots, y_k,f)&\longrightarrow & f(y_j),&
\end{array}
$$
where $Z=Z_m$ if the target of $f$ is $X_m$.

\begin{definition} Let $\alpha_i\in H^*(X;{\mathbb R})$,$1\leq i\leq l$, $\beta_j\in
H^*(Z;{\mathbb R})$, $1\leq j\leq k$. Define the  relative
Gromov-Witten invariant
$$
   \langle\Pi_i\tau_{d_i}\alpha_i\mid \Pi_j\beta_j\rangle^{X,Z}_{\Gamma, T_k} =
   \frac{1}{|\hbox{Aut}(T_k)|}\int_{U^{X,Z}_{{\mathcal
   S}_e}}S^*_{X,Z}\Theta\wedge\Pi_i\psi_i^{d_i}\wedge(ev^X_i)^*\alpha_i\wedge\Pi_j(ev^Z_j)^*\beta_j,
$$
where $\Theta$ is the Thom class of the bundle ${\mathcal
E}_{X,Z}$ and Aut$(T_k)$ is the symmetry group of the partition
$T_k$. Denote by ${\mathcal T}_k = \{ (t_j, \beta_j)\mid j=1,
\cdots, k\}$ the weighted partition of $A\cdot Z$. If the vertex
of $\Gamma$ is decorated by $(g, A)$,  we will sometimes write
$$
   \langle\Pi_i\tau_{d_i}\alpha_i\mid {\mathcal
  T}_k\rangle^{X,Z}_{g,A}
  $$
  for
  $\langle\Pi_i\tau_{d_i}\alpha_i\mid
\Pi_j\beta_j\rangle^{X,Z}_{\Gamma, T_k} $.
\end{definition}

\begin{remark} In \cite{LR} only invariants  without
descendent classes were considered. But it is straightforward  to
extend the definition of \cite{LR} to include descendent classes.

\end{remark}

We can  decorate the tail of a relative graph $\Gamma$ by $(d_i,
\alpha_i)$ as in the absolute case. We can further  decorate the
relative tails by the weighted partition ${\mathcal T}_k$. Denote
the resulting weighted relative graph by
$\Gamma\{(d_i,\alpha_i)\}| {\mathcal T}_k$. In \cite{LR} the
source curve is required to be connected. We will also need to use
a disconnected version. For a disjoint union $\Gamma^{\bullet}$ of
weighted relative  graphs and a corresponding disjoint union of
partitions, still denoted by $T_k$, we  use
$\langle\Gamma^{\bullet}\{(d_i, \alpha_i)\} | {\mathcal T}_k
\rangle^{X,Z}$ to denote the corresponding relative invariants
with a disconnected domain, which is simply the product of the
connected relative invariants. Notice that although we use
$\bullet$ in our notation following \cite{MP}, our disconnected
invariants are different. The disconnected invariants there depend
only on the genus, while ours depend on the finer graph data.

\subsection{Partial orderings on relative GW
invariants}

In \cite{MP}, the authors first introduced a partial order on the
set of relative Gromov-Witten invariants of a ${\mathbb
P}^1$-bundle. The authors, \cite{HLR}, refined their partial order
on the set of relative Gromov-Witten invariants of a Blow-up
manifold relative to the exceptional divisor, and used this
partial order to obtain a Blow-up correspondence of
absolute/relative Gromov-Witten theory. In this subsection, we
will review the partial order on the set of relative Gromov-Witten
invariants.

First of all, all Gromov-Witten invariants vanish if $A\in
H_2(X,{\mathbb Z})$ is not an effective curve class. We define a
partial ordering on $H_2(X,{\mathbb Z})$ as follows:
$$
   A'< A
$$
if $A-A'$ is a nonzero effective curve class.

The set of pairs $(m,\delta)$ where $m\in {\mathbb Z}_{>0}$ and
$\delta\in H^*(Z, {\mathbb Q})$ is partially ordered by the
following size relation
\begin{equation}\label{size-order}
   (m,\delta) > (m', \delta')
\end{equation}
if $m>m'$ or if $m=m'$ and $\deg(\delta)>\deg(\delta')$.

Let $\mu$ be a partition weighted by the cohomology of $Z$, i.e.,
$$
   \mu = \{(\mu_1, \delta_{r_1}),\cdots, (\mu_{\ell(\mu)},
   \delta_{r_{\ell(\mu)}})\}.
$$
We may place the pairs of $\mu$ in decreasing order by size
(\ref{size-order}). We define
$$
  \deg (\mu) = \sum \deg (\delta_{r_i}).
$$
A lexicographic ordering on weighted partitions is defined as
follows:
$$
   \mu \stackrel{l}{>}\mu'
$$
if, after placing the pairs in $\mu$ and $\mu'$ in decreasing
order by size, the first pair for which $\mu$ and $\mu'$ differ in
size is larger for $\mu$.

For the nondescendent relative Gromov-Witten invariant
$$
   \langle \varpi\mid \mu \rangle^{X,Z}_{g,A},
$$
denote by $\| \varpi\|$ the number of absolute insertions.

\begin{definition}\label{order}
A partial ordering $\stackrel{\circ}{<}$ on the set of
nondescendent relative Gromov-Witten invariants is defined as
follows:
$$
  \langle \varpi'\mid \mu'\rangle^{X,Z}_{g',A'}
  \stackrel{\circ}{<} \langle \varpi\mid \mu\rangle^{X,Z}_{g,A}
$$
if one of the conditions below holds
\begin{enumerate}
\item[(a)] $A'<A$,
\item[(b)] equality in (a) and $g'<g$,
\item[(c)] equality in (a)-(b) and $\|\varpi'\|<\|\varpi\|$,
\item[(d)] equality in (a)-(c) and $\deg(\mu') > \deg (\mu)$,
\item[(e)] equality in (a)-(d) and $\mu'\stackrel{l}{>}\mu$.
\end{enumerate}

\end{definition}

\subsection{Degeneration formula}\label{df}
Now we describe the degeneration formula of GW-invariants under
symplectic cutting.

As an operation on topological spaces,  the symplectic cut is
essentially collapsing the circle orbits in the hypersurface
$H^{-1}(0)$ to points in $Z$.

Suppose that $X_0\subset X$ is an open codimension zero
submanifold with a Hamiltonian $S^1-$action. Let $H:X_0\to \mathbb
R$ be a Hamiltonian function with $0$ as a regular value. If
$H^{-1}(0)$ is a separating hypersurface of $X_0$, then we obtain
two connected manifolds $X_0^{\pm}$ with boundary $\partial
X_0^{\pm}=H^{-1}(0)$. Suppose further that $S^1$ acts freely on
$H^{-1}(0)$. Then the  symplectic reduction $Z=H^{-1}(0)/S^1$ is
canonically a symplectic manifold of dimension $2$ less.
Collapsing the $S^1-$action on $\partial X^{\pm}=H^{-1}(0)$, we
obtain closed smooth manifolds $\overline{X_0}^{\pm} $ containing
respectively real codimension $2$ submanifolds $Z^{\pm}=Z$ with
opposite normal bundles. Furthermore $\overline {X_0}^{\pm}$
admits a symplectic structure $\overline \omega^{\pm}$ which
agrees with the restriction of $\omega$ away from $Z$, and whose
restriction to $Z^{\pm}$ agrees with the canonical symplectic
structure $\omega_Z$ on $Z$ from symplectic reduction.

This is neatly shown by considering $X_0\times \mathbb C$ equipped
with appropriate  product symplectic structures and
 the product $S^1$-action on $X_0\times\mathbb C$, where $S^1$ acts on $\mathbb C$ by complex
multiplication. The extended action is Hamiltonian if we use the
standard symplectic structure $\sqrt {-1}dw\wedge d\bar w$ or its
negative on the $\mathbb C$ factor.Then the moment map is
$$\mu_+(u, w)=H(u)+|w|^2:X_0\times \mathbb C \to {\mathbb R},$$
 and
$\mu_+^{-1}(0)$ is the disjoint union of $S^1-$invariant sets
$$\{(u,w)| H(u)= -|w|^2<0\} \quad\hbox{and}\quad \{(u, 0)|H(z)=0\}.$$ We
define $\overline {X_0}^+$ to be the symplectic reduction
$\mu_+^{-1}(0)/S^1$. Then $\overline X_0^+$ is the disjoint union
of an open symplectic submanifold and a closed codimension 2
symplectic submanifold identified with $(Z, \omega_Z)$. The open
piece can be identified symplectically with
$$X_0^+=\{u\in X_0|H(u)<0\}\subset X_0$$
by the map $u\to (u, \sqrt {-H(u)}).$

Similarly, if we use $-{\bf i}dw\wedge d\bar w$, then the moment
map is
$$\mu_-(u, w)=H(u)-|w|^2:X_0\times \mathbb C \to \mathbb R$$
and  the corresponding symplectic reduction $\mu_-^{-1}(0)/S^1$,
denoted by $\overline X_0^-$,  is the disjoint union of an open
piece identified symplectically with
$$
X_0^-=\{u\in X_0|H(u)>0\}
$$
by the map $\phi_0^-:u\to (u, \sqrt {H(u)})$, and a closed
codimension 2 symplectic submanifold identified with $(Z,
\omega_Z)$.

We finally define $\overline X^{+}$ and $\overline X^{-}$.
$\overline X^+$ is simply $\overline{X_0}^+$, while $\overline
X^-$ is obtained from gluing symplectically $X^-$ and $\overline
X_0^-$ along $X_0$ via $\phi_0^-$. Notice that $\overline
X^-=(X^--X_0)\cup \overline {X_0}^-$ as a set.

 The two symplectic
manifolds $(\overline{X}^{\pm}, \overline \omega^{\pm})$ are
called the symplectic cuts of $X$ along $H^{-1}(0)$.

Thus we have a continuous map
$$\pi:X\to \overline{X}^+\cup_Z\overline{X}^-.$$
As for the symplectic forms, we have $\omega^+|_Z = \omega^-|_Z$.
Hence, the pair $(\omega^+, \omega^-)$ defines a cohomology class
of $\overline{X}^+\cup_Z\overline{X}^-$, denoted by
$[\omega^+\cup_Z\omega^-]$. It is easy to observe that
\begin{equation}\label{cohomology relation}
   \pi^* ([\omega^+\cup_Z\omega^-]) = [\omega].
\end{equation}
 Let $B\in
H_2(X;{\mathbb Z})$ be in the kernel of
$$
  \pi_* : H_2(X;{\mathbb Z})
\longrightarrow H_2(\overline{X}^+\cup_Z\overline{X}^-; {\mathbb
Z}). $$
 By (\ref{cohomology relation}) we have $\omega(B) =0$.
Such a class is called a vanishing cycle.
 For $A\in H_2(X; {\mathbb Z})$ define $[A] = A + \mbox{Ker}
(\pi_*)$ and
\begin{equation}\label{vanishing cycle}
 \langle\Pi_i\tau_{d_i}\alpha_i\rangle^X_{g,[A]}
= \sum_{B\in[A]}\langle\Pi_i\tau_{d_i}\alpha_i\rangle^X_{g,B} .
\end{equation}
Notice that  $\omega$ has constant pairing with any element in
$[A]$.
 It follows from  the Gromov
compactness theorem that there are only finitely many such
elements in $[A]$ represented by $J$-holomorphic stable maps.
Therefore, the summation in (\ref{vanishing cycle}) is finite.

The degeneration formula expresses
$\langle\Pi_i\tau_{d_i}\alpha_i\rangle^X_{g,[A]} $ in terms of
relative invariants of $(\overline{X}^+, Z)$ and $(\overline{X}^-,
Z)$ possibly with disconnected domains.

To begin with, we need to assume that the cohomology class
$\alpha_i$ is of the form $$\alpha_i =
\pi^*(\alpha^+\cup_Z\alpha^-).$$
 Here $\alpha_i^\pm \in H^*(\overline{X}^\pm; {\mathbb
R})$ are classes with  $\alpha_i^+|_Z = \alpha_i^-|_Z$ so that
 they give rise to a class $\alpha_i^+\cup_Z\alpha_i^-\in
H^*(\overline{X}^+\cup_Z\overline{X}^-; {\mathbb R})$.

Next, we proceed to write down the degeneration formula.
    We first specify the relevant
topological type of a marked Riemann surface mapped into
$\overline{X}^+\cup_Z\overline{X}^-$ with the following
properties:
\begin{enumerate}
\item[(i)] Each connected component is mapped  either into
$\overline{X}^+$ or $\overline{X}^-$ and carries a respective
degree 2 homology class; \item[(ii)] The images of two distinct
connected components only intersect each other along $Z$;
\item[(iii)] No two connected components which are both mapped
into $\overline{X}^+$ or $\overline{X}^-$ intersect each other;
\item[(iv)] The marked points are not mapped to $Z$; \item[(v)]
Each point in the domain mapped to $Z$ carries a positive integer
(representing the order of tangency).
\end{enumerate}

By abuse of language we call the above data a $(\overline{X}^+,
\overline{X}^-)-$graph.
 Such a graph gives rise to  two relative
graphs of $(\overline{X}^+, Z)$ and $\overline{X}^-, Z)$ from
(i-iv), each possibly being disconnected. We denote them by
 $\Gamma^{\bullet}_+$ and $
\Gamma^{\bullet}_-$ respectively. From (v) we also get  two
partitions $T_+ $ and $T_-$.  We call   a $(\overline{X}^+,
\overline{X}^-)-$graph a degenerate $(g, A, l)-$graph if the
resulting pairs $(\Gamma^{\bullet}_+, T_+)$ and $
(\Gamma^{\bullet}_-, T_-)$ satisfy the following constraints: the
total number of marked points is $l$, the relative tails are the
same, i.e. $ T_+= T_-$, and the identification of relative tails
produces a connected graph of $X$ with  total homology class
$\pi_*[A]$ and arithmetic genus $g$.

Let $\{\beta_a\}$ be a self-dual basis of $H^*(Z;{\mathbb R})$ and
$\eta^{ab} = \int_Z\beta_a\cup \beta_b$. Given $g, A$ and $ l$,
consider a degenerate  $(g, A, l)-$graph.
 Let $T_k=T_+=T_-$ and ${\mathcal T}_k$ be
a weighted partition $\{t_j, \beta_{a_j}\}$. Let ${\mathcal T}'_k
=\{t_j, \beta_{a_{j'}}\}$ be the dual weighted partition.

The degeneration formula for
$\langle\Pi_i\tau_{d_i}\alpha_i\rangle^X_{g,[A]}$ then reads as
follows,
$$
\begin{array}{ll}
 & \langle\Pi_i\tau_{d_i}\alpha_i\rangle^X_{g,[A]}
\cr =&\sum
   \langle \Gamma^{\bullet} \{(d_i, \alpha_i^+)\}|   {\mathcal T}_k\rangle^{\overline{X}^+,Z}
   \Delta({\mathcal T}_k) \langle \Gamma^{\bullet}\{(d_i, \alpha_i^-)\}|
    {\mathcal T}'_k\rangle^{\overline{X}^-,Z},\cr
\end{array}
$$
where the summation is taken over all degenerate $(g,A,
l)-$graphs, and
$$\Delta({\mathcal T}_k )=\prod_j t_j |\mbox{Aut}\,(T_k)|.$$

\section{Relative GW-invariants of ${\mathbb P}^1$-bundles}

Suppose that $Z$ is a symplectic submanifold of $X$ of codimension
2. When applying the degeneration formula, we often need to
express the absolute Gromov-Witten invariants of $X$ as a
summation of products of relative Gromov-Witten invariants of
symplectic cuts of $X$. Thus if we want to obtain a comparison
theorem of Gromov-Witten invariant by the degeneration formula,
the point will be how to compute the relative Gromov-Witten
invariants of a ${\mathbb P}^1$-bundle. In this section, we will
prove a vanishing theorem for genus zero relative Gromov-Witten
invariants of the ${\mathbb P}^1$-bundle $Y$ relative to the
infinity section and compute some genus zero two-point relative
fiber class GW invariants of the ${\mathbb P}^1$-bundle $Y$.

Suppose that $L$ is a line bundle over $Z$ and $Y={\mathbb
P}(L\oplus {\mathbb C})$. Let $D$, $Z$ be the infinity section and
zero section of $Y$ respectively. Let $\beta_1, \cdots,
\beta_{m_Z}$ be a self-dual basis of $H^*(Z,{\mathbb Q})$
containing the identity element. We will often denote the identity
by $\beta_{\rm id}$. The degree of $\beta_i$ is the real grading
in $H^*(Z,{\mathbb Q})$. We view $\beta_i$ as an element of
$H^*(Y, {\mathbb Q})$ via the pull-back by $\pi: Y={\mathbb
P}(L\oplus {\mathbb C})\longrightarrow Z$. Let $[Z]$, $[D]\in
H^2(Y, {\mathbb Q})$ denote the cohomology classes associated to
the divisors. Define classes in $H^*(Y, {\mathbb Q})$ by
\begin{eqnarray*}
\gamma_i &=& \beta_i,\\
\gamma_{m_Z+i} &=& \beta_i\cdot [Z],\\
\gamma_{2m_Z+i} &=& \beta_i\cdot [D].
\end{eqnarray*}
We will use the following notation:
\begin{eqnarray*}
\gamma_i^\beta &=& \beta_{i\,\,\mbox{mod}\,\, m_Z}, \\
\gamma_i^D &=& 1, [Z],\mbox{or} [D].
\end{eqnarray*}
The second assignment depends upon the integer part of
$(i-1)/m_Z$. The set $\{\gamma_1,\cdots,\gamma_{2m_Z}\}$
determines a basis of $H^*(Y,{\mathbb Q})$.

Since $D\cong Z$ as topological manifolds, $H^*(D, {\mathbb Q})$ is
isomorphic to $H^*(Z,{\mathbb Q})$. If there is no confusion, we
will also use $\beta_1,\cdots, \beta_{m_Z}$ as a self-dual basis of
$H^*(D,{\mathbb Q})$.

\subsection{Fiber class invariants}
In this subsection, we mainly compute some genus zero relative GW
invariants of ${\mathbb P}^1$-bundles with a fiber class.
According to \cite{MP}, we may transfer the computation of this
invariant on the  ${\mathbb P}^1$-bundle into that of some
associated invariants on ${\mathbb P}^1$.

Suppose that $L$ is a line bundle over $Z$ and
$$
 Y= {\mathbb P}(L\oplus {\mathcal O}) = \{ (z,l)| z\in Z, l\subset L_z\oplus
{\bf C}\},
$$
where $L_z$ is the fiber of $L$ at $z$.

Denote by $\pi: Y\longrightarrow Z$ the projection of the
${\mathbb P}^1$-bundle $Y$. Moreover there are two inclusions
(sections) of $Z$ in $Y= {\mathbb P}(L\oplus {\mathcal O})$:
\begin{enumerate}
\item[(1)] the ``zero section" $z\longrightarrow (z,0\oplus {\bf
C})$, denoted by $Z$,

\item[(2)] the ``section at infinity" $z\longrightarrow (z,
L_z\oplus 0)$, denoted by $D$.
\end{enumerate}

Let $\Gamma$ be the relative graph with the following data
\begin{enumerate}
\item[(1)] a vertex decorated by $A= sF\in H_2(Y, {\mathbb Z})$
and genus zero;

\item[(2)] $k$ relative tails;

\item[(3)] $l$ absolute tails.
\end{enumerate}

For any non-negative integer $m$, define $Y_m$ by gluing together
$m$ copies of $Y$, where the infinity section of the $i^{th}$
component is glued to the zero section of the $(i+1)^{st}$ $(1\leq
i \leq m)$ component; see Section \ref{rgw} for details.  Denote
by $\pi[m] : Y_m\longrightarrow Y$ the map which is the identity
on the root component $Y_0$ and contract all the bubble components
to $D_0$ via the projection of the fiber bundle of $Y_i$.

 Let $T_k=\{t_1,\cdots,t_k\}$ be a $k$-tuple of positive integers forming a partition of $s$.
 Denote by $\overline{\mathcal M}_{\Gamma, T_k}(Y,D) $  the
 moduli space of morphisms
 $$
       f:(C,x_0,\cdots, x_l; y_1,\cdots,y_k) \longrightarrow ( Y_m, D_\infty),
 $$
 such that
\begin{enumerate}
 \item[(1)] $(C,x_0,\cdots,x_l; y_1,\cdots,y_k)$ is a prestable curve of genus zero with
 $l$ absolute marked points $x_0,\cdots,x_l$ and $k$ relative marked points $y_1,\cdots, y_k$;.

 \item[(2)] $f^{-1}(D_\infty) = \sum t_iy_i$ as Cartier divisor and $\deg
 (\pi[m]\circ f) = s$.

\item[(3)] The predeformability condition: The preimage of the
singular
 locus Sing $Y_m = \cup_{i=0}^{m-1}D_i$ of $Y_m$
 is a union of  nodes of $C$, and if $p$ is one such node, then the two branches
 of $C$ at $p$ map into different irreducible components of $ Y_m$,
 and their orders of contact with the divisor $D_i$ (in their
 respective components of $Y_m$) are equal. The morphism $f$ is
 also required to satisfy a stability condition that there are no
 infinitesimal automorphisms of the sequence of maps $(C, x_0,\cdots,x_l; y_1,\cdots,y_k)\longrightarrow Y_m
 \stackrel{\pi[m]}{\longrightarrow}Y$ where the allowed
 automorphisms of the map from $Y_m$ to $Y$ are
 $\mbox{Aut}_D(Y_m)$.

 \item[(4)] The automorphism group of $f$ is finite.
\end{enumerate}

 Two such morphisms are isomorphic if they differ by an
 isomorphism of the domain and an automorphism of $(Y_m,D_0,
 D_\infty)$. In particular, this defines the automorphism group in
 the stability condition $(4)$ above.

 We introduce some notations which are used in \cite{Li1}. For any
 non-negative integer $m$, let
 $$
   {\mathbb P}^1[m] = {\mathbb P}^1_{(0)}\cup {\mathbb P}^1_{(1)}\cup \cdots
   \cup {\mathbb P}^1_{(m)}
 $$
 be a chain of $m+1$ copies ${\mathbb P}^1$, where ${\mathbb P}^1_{(l)}$ is
 glued to ${\mathbb P}^1_{(l+1)}$ at $p_1^{(l)}$ for $0\leq l\leq m-1$.
 The irreducible component ${\mathbb P}^1_{(0)}$ will also be
 referred to
 as the root component and the other irreducible components will be
 called the bubble components. A point $p_1^{(m)}\not= p_1^{(m-1)}$
 is fixed on ${\mathbb P}^1_{(m)}$. Denote still by
 $\pi[m]:{\mathbb P}^1[m]\longrightarrow {\mathbb P}^1$ the map which is the identity on the
 root component and contracts all the bubble components to
 $p_1^{(0)}$. For $m>0$, let
 $$
   {\mathbb P}^1(m) = {\mathbb P}^1_{(1)}\cup\cdots\cup {\mathbb P}^1_{(m)}
 $$
 denote the union of bubble components of ${\mathbb P}^1[m]$.

 Similar to the case of $Y_m$, we may define the associated moduli
 space $\overline{\mathcal M}_{\Gamma}({\mathbb P}^1, p_1^{(0)}; T_1)$ of relative stable maps
 to $({\mathbb P}^1, p_1^{(0)})$, see \cite{Li1} for its definition.

Next, we first review Maulik-Pandharipande's algorithm \cite{MP}
which reduces the relative Gromov-Witten invariant of $(Y,D)$ of
fiber class to that of $({\mathbb P}^1,p_1)$. Note that the moduli
space of stable relative maps
$$
    \overline{\mathcal M}_Y = \overline{\mathcal
    M}_{\Gamma, T_k}(Y,D)
$$
is fibered over $Z$,
\begin{equation}\label{fiber}
   \pi : \overline{\mathcal M}_Y \longrightarrow Z,
\end{equation}
with fiber isomorphic to the moduli space of maps of degree $s$ to
${\mathbb P}^1$ relative to the infinity point $p_1$ with tangency
order $s$:
$$
    \overline{\mathcal M}_{{\mathbb P}^1} = \overline{\mathcal
    M}_{\Gamma,T_k}({\mathbb P}^1,p_1).
$$
In fact, $\overline{\mathcal M}_Y$ is the fiber bundle constructed
from the principal $S^1$-bundle associated to $L$ and a standard
$S^1$-action on $\overline{\mathcal M}_{{\mathbb P}^1}$.

The $\pi$-relative obstruction theory of $\overline{\mathcal M}_Y$
is obtained from the $\overline{\mathcal M}_{{\mathbb P}^1}$-fiber
bundle structure over $Z$. The relationship between the
$\pi$-relative virtual fundamental class $[\overline{\mathcal
M}_{\bar{M}^+}]^{vir_\pi}$ and the virtual fundamental class
$[\overline{\mathcal M}_{\bar{M}^+}]^{vir}$ is given by the
equation
\begin{equation}\label{virtual}
[\overline{\mathcal M}_Y]^{vir} = c_{top}({\mathbb E}\otimes
TZ)\cap [\overline{\mathcal M}_Y]^{vir_\pi}
\end{equation}
where ${\mathbb E}$ is the Hodge bundle. Since we only consider
the case of genus zero,  (\ref{virtual}) can be written as
$$
[\overline{\mathcal M}_Y]^{vir} = [\overline{\mathcal
M}_Y]^{vir_\pi}.
$$

By integrating along the fiber, we can compute the relative
Gromov-Witten invariants of $Y$ by computing the equivariant
integrations in the relative Gromov-Witten theory of ${\mathbb
P}^1$; see \cite{MP} for the details.

Let ${\mathcal T}_k = \{(t_i,\beta_i)\}$ be the cohomology
weighted partition of $s$. By definition, we have
\begin{eqnarray}\label{invariant}
   & & \langle \tau_{d_1-1}\gamma_1,\cdots, \tau_{d_l-1}\gamma_l\mid{\mathcal T}_k\rangle^{Y,D}_{\Gamma}
   = \int_{[\overline{\mathcal M}_Y]^{vir}}\prod_{i=1}^l \psi_i^{d_i-1} ev_i^*\gamma_i\wedge \prod_jev_j^*\beta_j\\
   &=&\frac{1}{|\mbox{Aut}(T_k)|}\int_Z(\prod_i\gamma_i^\delta\prod_j\beta_j\cap
   \pi_*(\prod_i\psi_i^{d_i-1}ev_i^*(\gamma_i^{D_0})\cap[M_Y]^{vir_\pi})),\nonumber
\end{eqnarray}
 where the interior push-forward
$$
\pi_*(\prod_i\psi_i^{d_i-1}ev_i^*(\gamma_i^{D_0})\cap[M_Y]^{vir_\pi})
$$
is obtained from the corresponding Hodge integral in the
equivariant Gromov-Witten theory of $({\mathbb P}^1,p_1)$ after
replacing the hyperplane class on ${\mathbb C}{\mathbb P}^\infty$
by $C_1(L)$.

Therefore, via (\ref{invariant}), we may reduce the computation of
relative Gromov-Witten invariants $\langle
\tau_{d_1-1}\gamma_1,\cdots, \tau_{d_l-1}\gamma_l\mid{\mathcal
T}_k\rangle^{Y,D}_{\Gamma}$ to that of
$$
  \langle
\tau_{d_1-1}\delta_1,\cdots, \tau_{d_l-1}\delta_l\mid pt,\cdots,
pt\rangle^{{\mathbb P}^1,p_1}_{\Gamma, T_k},
$$
where $\delta_i\in H^*({\mathbb P}^1, {\mathbb Q})$, $1\leq i \leq
l$.

About the two point genus zero relative Gromov-Witten invariant of
$({\mathbb P}^1,p_1)$, we have

\begin{lemma}\label{lemma3-4}Let $\varpi \in H^2({\mathbb P}^1,
{\mathbb Q})$.
\begin{enumerate}
\item[(i)]If $d\not= s$, then $ \langle \tau_{d-1}\varpi\mid
(s,[pt])\rangle^{{\mathbb
 P}^1, p_1}_s =0$.

 \item[(ii)]  For $s>0$, we have
$$
  \langle
\tau_{s-1}\varpi\mid (s,[pt])\rangle^{{\mathbb
 P}^1, p_1}_s= \frac{1}{s!}.
$$
\end{enumerate}
\end{lemma}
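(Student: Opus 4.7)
My plan for (i) is a direct dimension count. The virtual real dimension of $\overline{\mathcal M}_{\Gamma,(s)}(\mathbb P^1,p_1)$ with genus zero, degree $s$, and one absolute plus one relative marked point of tangency $s$ is
\[
2\int_{s[\mathbb P^1]} c_1(T\mathbb P^1(-\log p_1))+2(\dim_{\mathbb C}\mathbb P^1-3)(1-g)+2(1+1)=2s-4+4=2s,
\]
using $c_1(T\mathbb P^1(-\log p_1))=c_1(T\mathbb P^1)-[p_1]$. The descendent insertion $\tau_{d-1}\varpi$ contributes total cohomological degree $2(d-1)+2=2d$, while the relative insertion weighted by the generator $[\mathrm{pt}]\in H^0(p_1,\mathbb Q)$ (the only nontrivial class on the single point $p_1$) contributes $0$. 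Hence the integral automatically vanishes unless $2d=2s$, which proves (i).

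For (ii), I would compute from an explicit model of the moduli space. Choose affine coordinates $z$ on the source and $w$ on the target $\mathbb P^1$ so that $y_1=\infty_z$ and $p_1=\infty_w$. A smooth genus zero degree $s$ stable map with $f^{-1}(p_1)=s\cdot y_1$ is then a polynomial $f(z)=a_sz^s+\cdots+a_0$ with $a_s\neq 0$. Using the translation part of $\mathrm{Aut}(\mathbb P^1,\infty)$ one gauges $x_0=0$; the residual source automorphism group is the $\mathbb C^*$ acting by $z\mapsto\lambda z$, equivalently $a_i\mapsto\lambda^i a_i$ on the coefficients. Taking $\varpi\in H^2(\mathbb P^1,\mathbb Q)$ Poincar\'e dual to a generic point $p$, the insertion $ev_1^*\varpi$ pulls back to the single condition $a_0=p$. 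The remaining moduli of maps to the root component is thus parametrized by $(a_1,\dots,a_s)$ with $a_s\neq 0$, modulo $\mathbb C^*$ with weights $(1,2,\dots,s)$. Compactifying along $a_s\to 0$, accompanied by bubbling off a component onto an expansion of the target at $p_1$ to preserve the tangency order $s$, identifies the compactified moduli with the weighted projective space $\mathbb P(1,2,\dots,s)$ of complex dimension $s-1$.

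The cotangent line $\mathcal L_1=T_0^*\mathbb P^1=\mathbb C\langle dz\rangle$ has $\mathbb C^*$-weight $1$ under $z\mapsto\lambda z$, so $\psi_1$ corresponds to the hyperplane class $H$ on $\mathbb P(1,2,\dots,s)$. The standard integral on a weighted projective space then yields
\[
\int_{\mathbb P(1,2,\dots,s)} H^{s-1}=\frac{1}{1\cdot 2\cdots s}=\frac{1}{s!},
\]
which is the desired value for part (ii).

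The main delicate step in this plan is the identification of the global compactified moduli stack with $\mathbb P(1,2,\dots,s)$ as a $\mathbb Q$-orbifold --- in particular, verifying that the boundary $a_s=0$ contributes exactly the expected divisor and that $\psi_1$ pulls back from $H$ with no correction coming from the relative bubble strata. If this geometric identification proves technically delicate, the fallback is virtual localization with respect to the torus acting on the target $\mathbb P^1$ fixing $\{0,p_1\}$: the combined constraints from the point insertion at $x_0$ and the maximal tangency $(s)$ at $p_1$ leave essentially a single nontrivial fixed locus, whose contribution via the standard Graber--Pandharipande vertex/edge formulas evaluates to $1/s!$.
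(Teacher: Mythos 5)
Your proposal is correct, and for part (i) it is exactly what the paper does: the paper disposes of (i) with "a simple dimension calculation", which is your count (real virtual dimension $2s$ for genus zero, degree $s$, one absolute and one fully tangent relative point, against insertion degree $2d$ plus the degree-zero weight $[pt]\in H^0(p_1)$). For part (ii), however, the paper gives no argument at all: it simply quotes Lemma~1.4 of Okounkov--Pandharipande \cite{OP}, whereas you re-derive that evaluation from scratch. Your derivation is the standard geometric one: after gauging $y_1=\infty$, $x_1=0$ and imposing the point constraint $a_0=p$, the remaining data $(a_1,\dots,a_s)$, $a_s\neq 0$, modulo the residual $\mathbb{C}^*$ of weights $(1,\dots,s)$, compactifies to $\mathbb{P}(1,2,\dots,s)$, and your weight bookkeeping for $\mathcal L_1$ does give $\psi_1=c_1(\mathcal O(1))=H$ (the sign is right: the reparametrization $z\mapsto\lambda z$ acts on $dz|_0$ with weight $+1$, so the descended bundle is $\mathcal O(+1)$, not $\mathcal O(-1)$), whence $\int H^{s-1}=1/s!$, which also makes transparent that the answer is really $\frac{1}{s!}\int_{\mathbb P^1}\varpi$. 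The trade-off is the one you already flag: your route must justify that the closure of the open stratum, including the strata with expanded targets where $a_s,\dots$ vanish, is globally the weighted projective space with the untwisted $\psi_1$, i.e.\ that no correction to $\psi_1=H$ and no extra virtual contribution comes from the rubber boundary; that is precisely the technical content that the paper (and you, in your fallback) outsource to \cite{OP} or to virtual localization. Since genus-zero relative maps to $(\mathbb P^1,p_1)$ are unobstructed this can be carried out, but if you want a short self-contained write-up, the localization fallback (or simply the citation, as the paper chooses) is the cleaner option; your version buys a concrete picture of the moduli space and of why the weights $1,2,\dots,s$ — hence the $1/s!$ — appear.
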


The proof of (i) follows from a simple dimension calculation and
(ii) of the lemma is Lemma 1.4 of \cite{OP}. In \cite{HLR}, the
authors generalized the result to general projective space
${\mathbb P}^n$ via localization techniques.

\begin{prop}\label{prop3-5}Let $s>0$.
\begin{enumerate}
\item[(i)] Let ${\mathcal T}_k = \{(t_i,\beta_i)\}$ be a
cohomology weighted  partition of $s$. Then
$$
    \langle \pi^*\alpha_1, \cdots, \pi^*\alpha_q,\beta_1\cdot [Z],\cdots,\beta_l\cdot [Z]\mid
    {\mathcal T}_k\rangle^{Y,D}_{sF} =0
$$
except for $s=k=1$ and $q=0$.

 \item[(ii)] For $s>0$, we have the two-point relative
invariant
$$
   \langle \tau_{d-1}(\beta_0\cdot [Z])\mid(s,\beta_\infty)\rangle^{Y,D}_{sF}
    =\left\{ \begin{array}{ll}
    \frac{1}{s!}\int_Z\beta_0\wedge\beta_\infty, & d=s\\
    &\\
    0, & d\not= s
    \end{array}\right.,
$$
where $\beta_0\in H^*(Z, {\mathbb Q})$ and $\beta_\infty\in H^*(D,
{\mathbb Q})$.

\item[(iii)] For $s=k=1$, we have
$$
  \langle \iota^!(\beta_1),\cdots, \iota^!(\beta_l) \mid (1,\gamma)\rangle^{Y,D}_{F}
  = \int_Z\beta_1\wedge\cdots\wedge\beta_l\wedge \gamma.
$$
\end{enumerate}
\end{prop}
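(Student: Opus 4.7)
The plan is to prove all three parts via the Maulik--Pandharipande reduction formula \eqref{invariant}, which expresses any fiber-class relative invariant of $(Y,D)$ as a base integral over $Z$ of the $\pi$-pushforward of a fiberwise relative invariant on $(\mathbb{P}^1,p_1)$. For the decompositions in the chosen basis: $\pi^*\alpha$ splits as $\gamma^\beta=\alpha$, $\gamma^{D_0}=1\in H^0(\mathbb{P}^1)$; any class $\beta\cdot[Z]$ (including $\iota^!(\beta)=\beta\cdot[Z]$) splits as $\gamma^\beta=\beta$, $\gamma^{D_0}=[Z]$, which restricts to the point class $[pt]\in H^2(\mathbb{P}^1)$ on each fiber; and each relative insertion $(t,\tilde\beta)$ contributes the cohomology class $\tilde\beta$ to the $Z$-integral while imposing tangency $t$ at $p_1$ on the fiber.

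For part (i), I would argue purely by a dimension count on the fiber. The virtual complex dimension of the fiber moduli of degree-$s$ relative maps to $(\mathbb{P}^1,p_1)$ with $k$ relative markings (of total tangency $s$) and $q+l$ absolute markings is $2s+(1-3)+k+(q+l)-s=s+k+q+l-2$, while the integrand $\prod_i ev_i^*(\gamma_i^{D_0})$ has complex cohomological degree $0\cdot q+1\cdot l=l$. Fiber integration $\pi_*$ lowers degree by the fiber dimension and so produces a class on $Z$ of complex degree $l-(s+k+q+l-2)=2-s-k-q$. This vanishes unless $s+k+q\le 2$, and since $s\ge 1$ and $k\ge 1$, the only possibility is $s=k=1$, $q=0$, proving (i).

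For part (ii), formula \eqref{invariant} gives
\[
\langle \tau_{d-1}(\beta_0\cdot[Z])\mid(s,\beta_\infty)\rangle^{Y,D}_{sF}=\int_Z \beta_0\cdot \beta_\infty\cdot \pi_*\!\left(\psi_1^{d-1}\,ev_1^*([pt])\cap[\overline{\mathcal{M}}_Y]^{vir_\pi}\right),
\]
using $|\mathrm{Aut}((s))|=1$. The push-forward is the fiber invariant $\langle \tau_{d-1}([pt])\mid(s,[pt])\rangle^{\mathbb{P}^1,p_1}_s$, which by Lemma \ref{lemma3-4} is $0$ for $d\ne s$ and $1/s!$ for $d=s$; this yields the stated formula. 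For part (iii), with $s=k=1$ the reduction similarly gives
\[
\langle \iota^!(\beta_1),\ldots,\iota^!(\beta_l)\mid(1,\gamma)\rangle^{Y,D}_F=\int_Z \beta_1\cdots \beta_l\cdot \gamma\cdot \pi_*\!\left(\prod_{i=1}^l ev_i^*([pt])\cap[\overline{\mathcal{M}}_Y]^{vir_\pi}\right),
\]
so it suffices to prove the fiber count $N_l:=\langle [pt],\ldots,[pt]\mid(1,[pt])\rangle^{\mathbb{P}^1,p_1}_1=1$. Since a degree-one stable map to $\mathbb{P}^1$ is an isomorphism, the moduli space is smooth (virtual class equals fundamental class), and a direct enumerative count using that $PSL_2(\mathbb{C})$ is sharply $3$-transitive on $\mathbb{P}^1$ shows $N_l=1$. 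Alternatively, one can take $N_1=1$ from Lemma \ref{lemma3-4}(ii) (with $s=1$, $\varpi=[pt]$) and apply the relative divisor equation, noting that $[pt]|_{p_1}=0$ kills the boundary correction terms, to reduce $N_l$ to $N_{l-1}$ inductively.

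The main obstacle is the degree-one fiber count in part (iii): while the enumerative picture is transparent, a fully rigorous argument requires either checking that the virtual fundamental class on the smooth moduli coincides with the ordinary fundamental class, or carefully running the inductive relative divisor argument while handling the boundary stratification correctly. Everything else reduces to bookkeeping around formula \eqref{invariant} together with the dimension/degree arithmetic governing the fiber pushforward.
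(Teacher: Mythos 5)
Your treatment of (i) and (ii) is essentially the paper's: both run everything through the Maulik--Pandharipande reduction (\ref{invariant}), dispose of (i) by the same dimension count on the fiber theory of $({\mathbb P}^1,p_1)$ (the paper phrases the constraint as $s+k=2-q$, you as $\deg$ of the pushforward class $=2-s-k-q\geq 0$; same arithmetic, same conclusion $s=k=1$, $q=0$), and get (ii) immediately from Lemma \ref{lemma3-4}. The only genuine divergence is the last step of (iii), the evaluation of $N_l=\langle [pt],\cdots,[pt]\mid(1,[pt])\rangle^{{\mathbb P}^1,p_1}_1$. The paper avoids any direct moduli analysis: it degenerates ${\mathbb P}^1$ into ${\mathbb P}^1\cup_{pt}{\mathbb P}^1$, distributes one point insertion to one side and $l$ to the other in the absolute invariant $\langle [pt],\cdots,[pt]\rangle^{{\mathbb P}^1}_1=1$ (divisor axiom), and uses Lemma \ref{lemma3-4} to evaluate the one-point factor, so $N_l=1$ falls out of the degeneration formula. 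You instead propose either a direct enumerative count (degree-one maps are isomorphisms, unobstructedness, sharp $3$-transitivity of $PSL_2({\mathbb C})$) or an induction via the relative divisor equation, with the correction terms killed because $[pt]|_{p_1}=0$ in $H^*(p_1)$. Both of your routes are valid and standard; the divisor-equation induction is closest in spirit to the paper and requires no transversality discussion, which addresses the ``main obstacle'' you flag, while the paper's degeneration argument sidesteps it entirely (at the cost of implicitly using that the other distribution terms in the degeneration formula vanish, again by Lemma \ref{lemma3-4} and dimension reasons).
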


\begin{proof}
(i). From (\ref{invariant}), we are reduced to a relative
Gromov-Witten invariant of ${\mathbb P}^1$ of the form
$$
  \langle {\mathbb P}^1, \cdots, {\mathbb P}^1, [pt], \cdots, [pt]\mid (t_1,[pt]),\cdots,
  (t_k,[pt])\rangle^{{\mathbb P}^1, p_1}_s.
$$
A dimension count shows that this invariant of ${\mathbb P}^1$ is
nonzero only if $s+k=2-q$. Since $s>0$ and $k>0$,  the only
possibility is $s=k=1$ and $q=0$.

The proof of (ii) directly follows from (\ref{invariant}) and
Lemma \ref{lemma3-4}.

 (iii). From (\ref{invariant}),  we have
\begin{eqnarray*}
& &\langle \iota^!(\beta_1),\cdots, \iota^!(\beta_l) \mid
(1,\gamma)\rangle^{Y,D}_{F}\\
 & = & \int_Z\beta_1\wedge\cdots\wedge\beta_l\wedge \gamma \langle [pt],\cdots,[pt] |(1,[pt])\rangle^{{\mathbb P}^1, p_1}_1.
\end{eqnarray*}
It remains to prove $\langle [pt],\cdots,[pt]
|(1,[pt])\rangle^{{\mathbb P}^1, p_1}_1=1 $. In fact, we consider
the absolute invariant of ${\mathbb P}^1$ with $l+1$ point
insertions: $\langle [pt],\cdots, [pt]\rangle^{{\mathbb P}^1}_1$.
First of all, by divisor axiom, we know that this absolute
invariant equals $1$. We apply the degeneration formula to this
invariant of  ${\mathbb P}^1$ and distribute one point insertion
to one side and other $l$ point insertions to other side. Then we
have
\begin{eqnarray*}
1 &=& \langle [pt],\cdots,[pt]\rangle^{{\mathbb P}^1}_1 \\
& =& \langle [pt],\cdots,[pt] |(1,[pt])\rangle^{{\mathbb P}^1,
p_1}_1\langle [pt]\mid (1, [pt])\rangle^{{\mathbb P}^1, p_1}_1\\
&=& \langle [pt],\cdots,[pt] |(1,[pt])\rangle^{{\mathbb P}^1,
p_1}_1.
\end{eqnarray*}
In the last equality, we used Lemma \ref{lemma3-4}. This proved
(iii).

\end{proof}

\subsection{A vanishing theorem}

In this subsection, we will prove a vanishing result for some
relative Gromov-Witten invariants of ${\mathbb P}^1$-bundle, in
particular, for some non-fiber homology class invariants.

Let $\Gamma_0$ be a relative graph with the following data:
\begin{enumerate}
\item[(i)] a vertex decorated by a homology class $A\in H_2(Y,
{\mathbb Q})$ and genus zero,

\item[(ii)] $l+q$ tails associated to $l+q$ absolute marked
points,

\item[(iii)]$k$ relative tails associated to $k$ relative marked
points.
\end{enumerate}

Denote by $A$ the homology class of the relative stable map
$(\Sigma, f)$ to $(Y,D)$ and by $F$ the homology class of a fiber
of $Y$. Let   $T_k= \{ t_1,\cdots,t_k\}$ be a partition of $D\cdot
A$ and $d_i$, $1\leq i\leq l$, be positive integers. Denote $d =
\sum^l_{i=1} d_i$. Denote by $\iota: Z\longrightarrow Y$ the
inclusion of $Z$ into $Y$ via the zero section of $Y$. Then for
any $\beta\in H^*(Z,{\mathbb R})$, the inclusion map $i$ pushes
forward the class $\beta$ to a cohomology class $\iota^!(\beta)\in
H^*(Y, {\mathbb Q})$, determined by the pull-back map $\iota^*$
and Poincar\'e duality.

\begin{prop}\label{prop3-1}
  Suppose $A\not= sF$ or $k+l+q\geq 3$. Assume
that $Z^*(A)\geq \sum d_i$ and $c_1(L)(C)\geq 0$ for any
$J$-holomorphic curve $C$ into $Z$. Then for any $\beta_i\in
H^*(Z,{\mathbb Q})$, $1\leq i\leq l$, and any weighted partition
${\mathcal T}_k=\{(t_i,\delta_i)\}$ of $D\cdot A$, we have
$$
   \langle \varpi, \tau_{d_1-1}\iota^!(\beta_1), \cdots,
   \tau_{d_l-1}\iota^!(\beta_l)\mid {\mathcal T}_k\rangle ^{Y,
   D}_{\Gamma_0, T_k} = 0,
$$
where $\varpi$ consists of insertions of the form $\pi^*\alpha_1,
\cdots, \pi^*\alpha_q$.
\end{prop}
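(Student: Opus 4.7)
My plan is to split the analysis into two cases based on whether $A$ is a fiber class of $\pi:Y\to Z$, and in both cases to exploit the identity $\iota^!(\beta_i)=[Z]\cup \pi^*\beta_i$ in $H^*(Y)$ together with the fiber-bundle structure underlying \eqref{invariant}.

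\textbf{Case 1 ($A=sF$).} The hypothesis then forces $k+l+q\geq 3$, and since $D\cdot A=s\geq 1$ we have $k\geq 1$. Applying \eqref{invariant} and decomposing $\gamma=\gamma^\beta\cdot\gamma^{D_0}$: each $\iota^!(\beta_i)$ contributes $\gamma^\beta=\beta_i$ and $\gamma^{D_0}=[Z]$ (whose restriction to a fiber is the point class), while each $\pi^*\alpha_j$ contributes $\gamma^\beta=\alpha_j$ and $\gamma^{D_0}=1$. The fiber-wise push-forward $\pi_*\bigl(\prod_i \psi_i^{d_i-1}\, ev_i^*[Z]\cap [\overline{\mathcal M}_Y]^{vir_\pi}\bigr)$ therefore lives in cohomological degree $2\sum d_i-(2s+2l+2q+4k-4)$ on $Z$. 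The bound $Z^*(sF)=s\geq \sum d_i$ combined with $l+q+2k\geq 4$ (forced by $k+l+q\geq 3$ and $k\geq 1$) makes this degree strictly negative, so the push-forward class is zero and the invariant vanishes.

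\textbf{Case 2 ($A\neq sF$).} Write $A=A_Z+sF$ with $A_Z=\pi_*A\neq 0$. Any $J$-holomorphic representative of $A$ projects under $\pi$ to a nontrivial stable $J$-holomorphic map realizing $A_Z$ in $Z$, so by positivity $c_1(L)(A_Z)\geq 0$ and $Z^*(A)=c_1(L)(A_Z)+s\geq s$. I apply the Maulik--Pandharipande degeneration of the target $Y$ along the zero section $Z$: since $N_Z=L$, the projective completion $P_Z=\mathbb P(L\oplus\mathcal O)$ is $Y$ itself, yielding $Y\rightsquigarrow Y\cup_Z Y$. By the degeneration formula of Section \ref{df}, the invariant decomposes as a sum of products of relative invariants of $(Y,D\cup Z)$ and $(Y,Z)$. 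The descendent absolute insertions $\tau_{d_i-1}\iota^!(\beta_i)$, being supported on the new relative divisor $Z$, become relative insertions $(d_i,\beta_i)$ at $Z$ on one of the pieces, the descendent $\psi^{d_i-1}$ fixing the contact order to be $d_i$. Each resulting summand either has its $(Y,Z)$-piece of fiber-class type, reducing to Case 1 applied to $(Y,Z)$ in place of $(Y,D)$, or carries a tangency profile at $Z$ incompatible with $Z^*(A)\geq \sum d_i$ under $c_1(L)(A_Z)\geq 0$, forcing vanishing in either case.

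\textbf{Main obstacle.} The principal technical step is Case 2: rigorously identifying each $\tau_{d_i-1}\iota^!(\beta_i)$ with a relative insertion $(d_i,\beta_i)$ under the degeneration formula, controlling any lower-order corrections, and then checking summand by summand that every term vanishes by Case 1 or by positivity. Carrying this out uses the partial orderings on relative invariants reviewed in Section 2.3 and likely requires an induction on $A$ in that order.
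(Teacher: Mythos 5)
Your Case 1 is workable (up to an arithmetic slip: the fiber of (\ref{fiber}) has real dimension $2(s+k+l+q-2)$, so the push-forward in (\ref{invariant}) lands in cohomological degree $2\sum d_i-2(s+k+l+q-2)$, not $2\sum d_i-(2s+2l+2q+4k-4)$; the vanishing still follows from $\sum d_i\le Z^*(sF)=s$ and $k+l+q\ge 3$). The gap is in Case 2, which is the actual content of Proposition \ref{prop3-1}, since the non-fiber classes are precisely the ones that must be killed in the comparison theorem. First, the degeneration formula recalled in the paper applies to absolute invariants; degenerating the target of a \emph{relative} invariant of $(Y,D)$ along the zero section requires a degeneration formula for relative invariants with respect to a second divisor, which you neither state nor justify. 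Second, and more seriously, the step ``$\tau_{d_i-1}\iota^!(\beta_i)$ becomes a relative insertion $(d_i,\beta_i)$, the descendent fixing the contact order'' is not how the formula works: divisor-supported absolute insertions are distributed to the $P_Z$-side as absolute insertions, and descendents of such classes are converted into relative insertions only through the Maulik--Pandharipande rubber calculus, which produces exactly the lower-order correction terms you set aside as ``the main obstacle.'' No argument is given that these corrections vanish, nor that the summands with ``incompatible tangency profile'' vanish. Finally the proposed induction is in danger of circularity: the pieces of the degeneration carrying the non-fiber part of $A$ are again relative invariants of $(Y,Z)$ or $(Y,Z\cup D)$ with non-fiber classes --- invariants of the very type you are trying to show vanish --- and your Case 1 does not apply to them.

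For contrast, the paper's proof involves no target degeneration and no induction on classes. The hypothesis ``$A\neq sF$ or $k+l+q\ge 3$'' guarantees that the bundle projection $\pi:Y\to Z$ induces a map of configuration spaces $\overline{\mathcal B}_{\Gamma_0,T_k}(Y,D,J)\to\overline{\mathcal B}^Z_{\pi_*(A)}(0,k+l+q,J)$ (contracting unstable fiber components). Lemma \ref{lemma3-2}, which is where $c_1(L)(C)\ge 0$ enters via the vertical--horizontal splitting $0\to V\to TY\to\pi^*TZ\to 0$, identifies $\mbox{Coker}\,L^{Y,D}_{\Sigma,f}$ with $\mbox{Coker}\,L^Z_{\pi(\Sigma,f)}$; hence a stabilizing bundle chosen on the $Z$-side pulls back to one on the $(Y,D)$-side, giving a map of virtual neighborhoods $\pi_{{\mathcal S}_e}:U^{Y,D}_{{\mathcal S}_e}\to U^Z_{{\mathcal S}_e}$ with $\dim U^{Y,D}_{{\mathcal S}_e}-\dim U^Z_{{\mathcal S}_e}=2(Z^*(A)+1)$ and with the Thom form and all evaluation classes pulled back from $U^Z_{{\mathcal S}_e}$. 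The hypothesis $Z^*(A)\ge\sum d_i$ then makes the total degree of the pulled-back part of the integrand exceed $\dim U^Z_{{\mathcal S}_e}$, so it vanishes identically and the invariant is zero. To complete your approach you would have to carry out the full relative/absolute correspondence with its correction terms; otherwise an argument of the paper's kind (a fibration of moduli/virtual neighborhoods over the $Z$-theory plus a dimension count) is needed.
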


\begin{proof}
  The projection $\pi: Y={\mathbb P}(L\oplus {\mathbb C})\longrightarrow Z$
induces a map between the   moduli spaces, denoted also by $\pi$,
\begin{equation}\label{proof-0}
  \pi: \overline{\mathcal B}_{\Gamma_0, T_k}(Y,D,J) \longrightarrow
  \overline{\mathcal B}^Z_{\pi_*(A)}(0,k+l+q,J),
\end{equation}
where $\pi$ contracts the unstable rational component whose image
is a fiber. $\pi$ is well-defined if $A\not= sF$ or $k+l+q\geq 3$.
By the definition, $\pi$ commutes with the evaluation map.
Furthermore, there is also a natural map(denoted by $\pi$ as well)
on $\Omega^{0,1}$ commuting with the map on configuration spaces.
Moreover, $\overline{\partial}$ commutes with $\pi$. Hence, it
induces a map from $\mbox{Coker}\,L^{Y,D}$ to $\mbox{Coker}\,L^Z$.
We claim that $\pi$ induces a map on a virtual neighborhood.

Let $\omega$ be an integral symplectic form on $Z$. Using
Siebert's construction \cite{S1}, we can construct a bundle
$\mathcal E$ dominating the local obstruction bundle generated by
$\mbox{Coker}\,L^Z_{\pi(\Sigma,f)}$. $\pi^*{\mathcal E}$ is a
bundle over $\overline{\mathcal B}_{\Gamma_0, T_k}(Y,D,J) $. We
want to show that $\pi^*\mathcal E$ dominates its local
obstruction bundle. Let $(\Sigma,f)$ be a relative stable map of
$(Y,D)$. Then we have

\begin{lemma}\label{lemma3-2}
  $\mbox{Coker}\, L^{Y,D}_{\Sigma,f}$ is isomorphic to
$\mbox{Coker}\, L^Z_{\pi(\Sigma,f)}$.
\end{lemma}

\begin{proof}
It is well-known that a stable map can be naturally decomposed
into connected components lying outside of $D$( rigid factors) or
completely inside $D$(rubber factors).
  Let $(\Sigma, f)$ be a rigid factor or
a rubber factor with relative marked points $x_1,\cdots, x_r$ such
that $f(x_i)\in Z$ or $D$ with order $k_i$. In both cases, it is a
stable map into $Y$. We take the complex as
$$
 \tilde{L}^{Y,D}_{\Sigma,f} \times \sum T^{k_i}_{x_i} :
 \{u\in\Omega^0(f^*TY)\mid u(x_i)\in TZ\} \longrightarrow
 \Omega^{0,1}(f^*TY\otimes_i{\mathcal O}_\Sigma(x_i))\oplus_i{\mathcal
 J}^{k_i}_{x_i}.
$$
We first study the cohomology $H^0_{\tilde{L}}$, $H^1_{\tilde{L}}$
of $\tilde{L}^{Y,D}_{\Sigma,f}$. There is a short exact sequence
\begin{equation}\label{proof-1}
0 \longrightarrow V\longrightarrow TY\longrightarrow
\pi^*TZ\longrightarrow 0,
\end{equation}
where $V$ is the vertical tangent bundle. It induces a short exact
sequence
\begin{equation}\label{proof-2}
0\longrightarrow f^*V\longrightarrow f^*TY\longrightarrow
f^*\pi^*TZ\longrightarrow 0.
\end{equation}

Choose a Hermitian metric and a unitary connection on $L$. It
induces a splitting of $(\ref{proof-1})$. We choose a metric of
$TY$ as the direct sum of the metric on $V$ and $\pi^*TZ$, where
the second one is induced from a metric on $Z$. The Levi-Civita
connection is a direct sum. Then $f^*V$ is a holomorphic line
bundle with respect to pullback of the Levi-Civita connection.

$(\ref{proof-2})$ induces a long exact sequence in cohomology
\begin{eqnarray*}
 0 & \longrightarrow & H^0(f^*V) \longrightarrow H^0(f^*TY)
 \longrightarrow H^0(f^*\pi^*TZ)\\
 & &\\
 & \longrightarrow & H^1(f^*V) \longrightarrow
 H^1(f^*TY)\longrightarrow H^1(f^*\pi^*TZ)\longrightarrow 0.
\end{eqnarray*}
It induces exact sequences
\begin{eqnarray*}
 &0 \longrightarrow H^0_{\tilde{L}}(f^*V)\longrightarrow
 H^0_{\tilde{L}}(f^*TY)\longrightarrow
 H^0_{\tilde{L}}(f^*\pi^*TZ),\\
&\\
& H^1_{\tilde{L}}(f^*V)\longrightarrow
H^1_{\tilde{L}}(f^*TY)\longrightarrow
H^1_{\tilde{L}}(f^*\pi^*TZ)\longrightarrow 0.
\end{eqnarray*}

Note that the normal bundle at the zero or infinity section is the
restriction of $V$. It is obvious that
$H^1_{\tilde{L}}(f^*\pi^*TZ) = H^1(f^*\pi^*TZ)$. An element of
$H^1_{\tilde{L}}(f^*V)$ with residue in $TZ$ must have zero
residue. Therefore,
\begin{equation}\label{proof-3}
H^1_{\tilde{L}}(f^*V) = H^1(\tilde{f}^*V),
\end{equation}
where $(\tilde{\Sigma}, \tilde{f})$ is obtained from $(\Sigma, f)$
by dropping the new marked points. For the same reason,
$$
   H^0_{\tilde{L}}(f^*V) = \{v\in H^0(f^*V)\mid
v(x_i) = 0\}.
$$

We claim that $H^1(\tilde{f}^*V) = 0$. Note that since $\Sigma$ is
a tree of ${\mathbb P}^1$'s, we see that $H^1(L)=0$ for any line
bundle $L$ on $\Sigma$ satisfying $\deg(L|_{\overline{\Sigma}})
\geq 0$ for any irreducible component $\overline{\Sigma}$ of
$\tilde{\Sigma}$.

Now we have
\begin{eqnarray*}
\deg (\tilde{f}^*V|_{\overline{\Sigma}}) &=&
\tilde{f}_*[\tilde{\Sigma}]\cdot c_1(V) =
\tilde{f}_*[\overline{\Sigma}]\cdot (\pi^*c_1(L)+2\xi)\\
& = & c_1(N)\cdot(\pi\circ \tilde{f})_*[\tilde{\Sigma}] +
2(\tilde{f}_*[\tilde{\Sigma}]\cdot \xi) \geq 0.
\end{eqnarray*}
Applying $L = \tilde{f}^*V$, we conclude that
$H^1_{\tilde{L}}(\tilde{f}^*V) = 0$. Next, we show that
\begin{equation}\label{proof-4}
  \oplus_i T^{k_i}_{x_i}: H^0_{\tilde{L}}(f^*TY)\longrightarrow
  \oplus_i{\mathcal T}^{k_i}_{x_i}
\end{equation}
is surjective. It is enough to show that the restriction to
$H^0_{\tilde{L}}(f^*V)$ is surjective. Consider the exact sequence
$$
  0\longrightarrow \tilde{f}^*V\otimes_i{\mathcal
O}(-k_ix_i) \longrightarrow \tilde{f}^*V\longrightarrow
\oplus_i\tilde{f}^*V_{k_ix_i}\longrightarrow 0.
$$

It induces a long exact sequence
$$
H^0(\tilde{f}^*V) \longrightarrow
\oplus_iH^0(\tilde{f}^*V_{k_ix_i}) \longrightarrow
H^1(\tilde{f}V\otimes_i{\mathcal O}(-k_ix_i)).
$$

Over each $\overline{\Sigma}$,
\begin{eqnarray*}
& &\deg \tilde{f}^*V\otimes_i{\mathcal
O}(-k_ix_i)\mid_{\overline{\Sigma}} =
\tilde{f}_*[\overline{\Sigma}]\cdot(\pi^*c_1(N) + 2\xi)\\
& & = c_1(N)\cdot (\pi\circ \tilde{f})_*[\overline{\Sigma}] +
2(\tilde{f}_*[\tilde{\Sigma}])\cdot \xi \geq 0.
\end{eqnarray*}

Hence, $H^1(\tilde{f}^*V\otimes_i{\mathcal O}(-k_ix_i)) =0$. This
implies that
$$
  H^0(\tilde{f}^*V)\longrightarrow \oplus H^0(\tilde{f}V_{k_ix_i})
$$
is surjective. Now we go back to $f$ and drop the constant term in
$H^0(\tilde{f}^*V_{k_ix_i})$. $(\ref{proof-4})$ becomes
\begin{equation}
   H^0_{\tilde{L}}(f^*V)\longrightarrow \oplus_i{\mathcal
   T}^{k_i}_{x_i},
\end{equation}
which is obviously surjective. By $(\ref{proof-3})$, we have
proved that $\mbox{Coker}(\tilde{L}^{Y,D}_{\Sigma,f}\times \sum
T^{k_i}_{x_i})$ is isomorphic to $H^1(f^*\pi^*TZ)$. Then, we argue
that $H^1(f^*\pi^*TZ)$ is isomorphic to $H^1(\pi (f)^*TZ)$. This
is obvious if $\pi (f)$ contracts an unstable component ${\mathbb
P}^1$, $\pi\circ f({\mathbb P}^1) =$ constant and ${\mathbb P}^1$
has one or two special points. Moreover, $\pi(\Sigma)$ is obtained
by contracting ${\mathbb P}^1$. Note that
$f^*\pi^*TZ\mid_{{\mathbb P}^1}$ is trivial.

The space of meromorphic $1$-forms on ${\mathbb P}^1$ with a
simple pole at one or two points is zero or $1$-dimensional. If
${\mathbb P}^1$ has only one special point, the residue at the
special point has to be zero. We can simply contract this
component and remove the pole at the other component which
${\mathbb P}^1$ is connected to. If ${\mathbb P}^1$ has two
special points, the residues at the two points have to be the
same. Then we can remove this component and the joint residue at
the two special points. Then we identify $H^1(f^*\pi^*TZ)$ and
$H^1(\pi (f)^*TZ)$.

Suppose that $(\Sigma,f)$ has more than one subfactor. Both
$\mbox{Coker}\,L^{Y,D}_{\Sigma,f}$ and
$\mbox{Coker}\,L^Z_{\pi(\Sigma,f)}$ are obtained by requiring the
residues at the new marked points to be opposite to each other.
Then our proof also extends to this case. Then we finish the proof
of Lemma \ref{lemma3-2}.

\end{proof}

Next, we continue the proof of  Proposition \ref{prop3-1} . Since
we have identified the obstruction spaces, we first choose a
stabilization term $\eta_i$ on $\overline{\mathcal
B}^Z_{\pi_*(A)}(0,k+l+q,J)$ to dominate the local obstruction
bundle generated by $\mbox{Coker}\,L^Z_{\pi(\Sigma,f)}$. Then, we
pull back $\eta_i$ over $\overline{\mathcal
B}_{\Gamma_0,T_k}(Y,D,J)$. By Lemma \ref{lemma3-2}, it dominates
$\mbox{Coker}\,L^{Y,D}$. This implies that $\pi$ induces a smooth
map on virtual neighborhood and a commutative diagram on
obstruction bundles
\begin{equation}\label{proof-5}
\begin{array}{ccccc}
  & &E_{Y,D}&\longrightarrow &E_Z\\
 & &\downarrow & & \downarrow\\
 \pi_{{\mathcal S}_e}& :& U^{Y,D}_{{\mathcal S}_e} & \longrightarrow &
 U^Z_{{\mathcal S}_e}.\end{array}
\end{equation}

Furthermore, the proper sections $S_{Y,D}$, $S_Z$ commutes with
the above diagram. $\pi_{{\mathcal S}_e}$ commutes with the
evaluation map for those $\beta_i$ classes. Choose a Thom form
$\Theta$ of $E_Z$. Its pullback is the Thom form on $E_{Y,D}$
(still denoted by $\Theta$).

It is clear that
$$
 \dim D_i = \dim U^{Y,D}_{{\mathcal S}_e} -2.
$$
By our construction, the $D_i$ intersect each other transversely.
Note that \begin{eqnarray*}
 \dim U^{Y,D}_{{\mathcal S}_e} & = & \mbox{rank}\, E_Z + 2(c^Y_1(A) + n-3 + l+q +k-\sum t_i).\\
 \dim U^Z_{{\mathcal S}_e} & = & \mbox{rank}\, E_Z + 2(c^Z_1(\pi_*(A)) + n - 1 - 3 + l+q+k).
\end{eqnarray*}
However,
$$
   c_1^Y(A) = c_1^Z(\pi_*(A)) + c_1(N)(\pi_*(A)) + 2\sum t_i = c_1^Z(\pi_*(A)) +
Z^*(A) +\sum t_i.
$$
Hence,
\begin{equation*}
\dim U^{Y,D}_{{\mathcal S}_e} - \dim U^Z_{{\mathcal S}_e} =
2(Z^*(A) +1) .
\end{equation*}
By definition, we have
\begin{eqnarray}\label{deg}
& & \deg \Theta + \sum_i\deg(\beta_i) + \deg\varpi + \sum_j\deg (\delta_j)\nonumber\\
& =& \dim U^{Y,D}_{{\mathcal S}_e} - 2d > \dim U^Z_{{\mathcal
S}_e},
\end{eqnarray}
where $\deg\varpi = \sum\deg\alpha_j$. Then, from (\ref{deg})and
$Z^*(A)\geq d$, we have
\begin{eqnarray*}
& &  (S_{Y,D})^*\Theta\prod
\psi_i^{d_i-1}ev^*_i\beta_i\wedge ev^*\varpi\wedge \prod ev^*_j\delta_j\\
&=& ((S_{Y,D})^*\Theta\prod \psi_i^{d_i-1}(\pi_{{\mathcal
S}_e})^*(ev^*_i\beta_i\wedge\prod ev^*\varpi\wedge \prod
ev^*_j\delta_j) = 0.
\end{eqnarray*}
In the last equality, we use $ev^*_i\beta_i\wedge\prod
ev^*\varpi\wedge \prod ev^*_j\delta_j=0$ on $U^Z_{{\mathcal
S}_e}$. Hence, the relative invariant is zero. This completes the
proof of
 Proposition \ref{prop3-1}.

\end{proof}

\begin{remark}
McDuff also proved the same result in the case without insertion
classes $\beta_i$ by a totally different method; see Lemma $1.7$
in \cite{M1}.
\end{remark}

\subsection{A nonvanishing theorem} When we apply the degeneration
formula, we often need to compute some special terms where the
degeneration graph lies completely on the side of the projective
bundle. In the previous subsection, we proved a vanishing theorem
for some relative invariants of $(Y,D)$. In this subsection, we
will consider the case where the relative invariants of $(Y,D)$
with empty relative insertion on $D$ are no longer zero and the
invariant of $Z$ will contribute in a nontrivial way.

Suppose that $A\in H_2(Z, {\mathbb Z})$. Denote by $i:
Z\longrightarrow Y$  the embedding of $Z$ into $Y$ as the zero
section. Consider  the relative invariant of $(Y,D)$
$$
\langle \tau_{i_1}(\beta_1[Z]),\tau_{i_2}(\beta_2[Z]),\cdots,
\tau_{i_k}(\beta_k[Z]),\varpi\mid\emptyset\rangle^{Y,D}_{0,A},
$$
where $\varpi$ consists of insertions of the form $\pi^*\alpha_1,
\cdots, \pi^*\alpha_l$. The dimension condition is
$$
2\sum(i_t +1) +\sum \deg\beta_t +\deg \varpi = 2(C_1^Y(A) + n-3 +
k+l).
$$
The dimension condition of the divisor invariant $\langle
\tau_{i_1}(\beta_1),\cdots,
\tau_{i_k}(\beta_k),i^*\varpi\rangle^Z_{0,A}$ is
$$
2(C_1^Z(A) +n-1-3 +k+l) = 2\sum i_t +\sum\deg\beta_t +\deg\varpi.
$$
Since $C_1^Y(A) = C_1^Z(A) + Z\cdot A$, so both invariants are
well-defined only when $k=Z\cdot A+1$.

\begin{theorem}\label{divisor-invariant}
Let $A\in H_2(Z, {\mathbb Z})$. Suppose that $k=Z\cdot A +1$ and
$C_1(L)(C)\geq 0$ for any holomorphic curve $C$ into $Z$. Then
\begin{eqnarray*}
& &\langle \varpi, \beta_1\cdot [Z],\beta_2\cdot [Z],\cdots,
 \beta_k\cdot [Z]\mid\emptyset\rangle^{Y,D}_{0,A}\\
& =&\langle \iota^*\varpi, \beta_1,\cdots,
 \beta_k\rangle^Z_{0,A},
\end{eqnarray*}
where $\varpi$ consists of insertions of the form $\pi^*\alpha_1,
\cdots, \pi^*\alpha_l$ and $\iota : Z\longrightarrow Y$ is the
embedding of $Z$ into $Y$ as the zero section.
\end{theorem}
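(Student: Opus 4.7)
The plan is to follow the template of Proposition~\ref{prop3-1}: use the projection $\pi\colon Y\to Z$ to induce a map $\pi_{\mathcal{S}_e}\colon U^{Y,D}_{\mathcal{S}_e}\to U^Z_{\mathcal{S}_e}$ between virtual neighborhoods, where the obstruction theories match by Lemma~\ref{lemma3-2} (the hypothesis $c_1(L)(C)\geq 0$ is used precisely to guarantee $H^1_{\tilde L}(\tilde f^*V)=0$, so this lemma applies). Exactly as in the proof of Proposition~\ref{prop3-1}, the real dimension difference is
$$\dim U^{Y,D}_{\mathcal{S}_e}-\dim U^Z_{\mathcal{S}_e}=2(Z^*(A)+1)=2(c_1(L)(A)+1)=2k,$$
so the fibers of $\pi_{\mathcal{S}_e}$ have real dimension $2k$. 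The difference with Proposition~\ref{prop3-1} is that here we will not produce a vanishing; instead, the $k$ insertions of $[Z]$ provide a top form along each fiber of $\pi_{\mathcal{S}_e}$, and one computes the invariant by integration along the fiber.

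To make this concrete, I would apply the projection formula. The class $[Z]\in H^2(Y)$ is Poincar\'e dual to the zero section, and a generic fiber of $Y\to Z$ is a $\mathbb{P}^1$ meeting $Z$ transversely in a single point, so $\int_{\mathbb{P}^1}[Z]=1$. By Lemma~\ref{lemma3-2} and the vanishing of $H^1$ in the vertical direction, a fiber of $\pi_{\mathcal{S}_e}$ over a point of $U^Z_{\mathcal{S}_e}$ is modelled on the product of the $\mathbb{P}^1$-fibers of $Y\to Z$ at the $k$ marked points constrained to $Z$; hence
$$\pi_{\mathcal{S}_e,*}\Bigl(\prod_{i=1}^{k}(ev_i^Y)^*[Z]\Bigr)=\prod_{i=1}^{k}\int_{\mathbb{P}^1_i}[Z]=1.$$
Combining this with $(ev_i^Y)^*\beta_i=(ev_i^Y)^*\pi^*\beta_i=\pi_{\mathcal{S}_e}^*(ev_i^Z)^*\beta_i$ and $(ev^Y)^*\varpi=\pi_{\mathcal{S}_e}^*(ev^Z)^*\iota^*\varpi$ (using $\pi\circ\iota=\mathrm{id}_Z$), together with the fact that the Thom form and the tautological section pull back compatibly along $\pi_{\mathcal{S}_e}$, the projection formula yields
$$\int_{U^{Y,D}_{\mathcal{S}_e}}S_{Y,D}^*\Theta\wedge\prod_i(ev_i^Y)^*(\beta_i\cdot[Z])\wedge(ev^Y)^*\varpi=\int_{U^Z_{\mathcal{S}_e}}S_Z^*\Theta\wedge\prod_i(ev_i^Z)^*\beta_i\wedge(ev^Z)^*\iota^*\varpi,$$
which is exactly $\langle\iota^*\varpi,\beta_1,\ldots,\beta_k\rangle^Z_{0,A}$. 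An alternative bookkeeping is to first use Remark~\ref{D} to convert each $\beta_i\cdot[Z]=\iota^!(\beta_i)$ into the constraint $ev_i^Y\in Z$, after which the fiber of $\pi_{\mathcal{S}_e}$ collapses to a point over the generic locus and the equal-dimensional virtual neighborhoods are identified directly.

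The main obstacle is verifying rigorously that the fiber integral of $\prod_{i=1}^{k}(ev_i^Y)^*[Z]$ along $\pi_{\mathcal{S}_e}$ equals $1$ at the virtual level. The geometric picture---that a fiber of $\pi_{\mathcal{S}_e}$ is a product of $k$ copies of $\mathbb{P}^1$ indexed by the marked points---is strictly correct only on the open locus where the source curve has no fiber-direction bubbles; at lower strata one must control stable maps with components that bubble into the fibers, and verify that their contribution to the fiber pushforward is still $1$. This is exactly where the positivity hypothesis $c_1(L)(C)\geq 0$ for $J$-holomorphic $C\subset Z$ is indispensable: it makes the vertical cohomology $H^1_{\tilde L}(\tilde f^*V)$ vanish on every stratum, so the fiber-direction Kuranishi model is unobstructed, $\pi_{\mathcal{S}_e}$ behaves as a $\mathbb{P}^1$-fibration on each stratum up to codimension-two corrections, and the degree-one/fiber-integral-equals-one computation can be propagated stratum-by-stratum as in the proof of Proposition~\ref{prop3-1}.
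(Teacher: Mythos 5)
Your overall framework coincides with the paper's: split $TY$ via a unitary connection on $L$, invoke Lemma~\ref{lemma3-2} to get the map $\pi_{\mathcal S_e}\colon U^{Y,D}_{\mathcal S_e}\to U^Z_{\mathcal S_e}$ with $E_{Y,D}=\pi_{\mathcal S_e}^*E_Z$, and reduce the identity to a fiberwise count. But the step that carries all the content is wrong as you state it. The fiber of $\pi_{\mathcal S_e}$ over a generic $(\mathbb P^1,x_1,\dots,x_{k+l},\tilde f)\in U^Z_{\mathcal S_e}$ is \emph{not} the product of the $\mathbb P^1$-fibers of $Y\to Z$ at the $k$ marked points; it is the space of holomorphic liftings of $\tilde f$ to $Y$ in the class $\iota_*A$, i.e.\ (away from vertical bubbling and the infinity section) the vector space $H^0(\mathbb P^1,\tilde f^*L)\cong\mathbb C^{\,Z\cdot A+1}$. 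The dimensions agree with your model only by the numerical accident $h^0(\mathcal O_{\mathbb P^1}(k-1))=k$, and the evaluation maps $ev_i$ on this fiber are evaluations of a section at the $x_i$, not projections of a product. Consequently your computation $\pi_{\mathcal S_e,*}\bigl(\prod_i ev_i^*[Z]\bigr)=\prod_i\int_{\mathbb P^1}[Z]=1$ is asserted, not proved: with your model the hypothesis $k=Z\cdot A+1$ would never enter the fiber integral, which signals that the model has bypassed the actual mechanism.

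What is missing is exactly the paper's central argument. Writing a preimage locally as $f=(\tilde f,f^V)$, the compatibility $\bar\partial_{J_Y}f=\pi_{\mathcal S_e}^*\nu$ forces $\bar\partial_{J_V}f^V=0$, so $f^V$ is a holomorphic section of $f^*L$; the insertions $\beta_i\cdot[Z]$ constrain $f(x_i)\in Z$, i.e.\ $f^V(x_i)=0$ for $i=1,\dots,k$. Since $\deg f^*L=Z\cdot A=k-1$, the bundle $f^*L\otimes\mathcal O(-x_1-\cdots-x_k)$ has negative degree, hence $f^V\equiv 0$: the only constrained lifting is the zero-section one, and (because the evaluation map $H^0(\tilde f^*L)\to\oplus_i L_{\tilde f(x_i)}$ is then an isomorphism) it is counted with multiplicity one, giving $\deg(\pi_{\mathcal S_e})=1$ and hence $\pi_{\mathcal S_e,*}\prod_i ev_i^*[Z]=1$. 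This vanishing statement is where the hypothesis $k=Z\cdot A+1$ is used, and it is the piece your proposal must supply; your closing remark about lower strata is fine (they are handled, as in Proposition~\ref{prop3-1}, by the codimension-two structure of the virtual neighborhood), but it does not substitute for the degree-one computation on the top stratum. Your ``alternative bookkeeping'' (converting $\beta_i\cdot[Z]$ into the constraint $ev_i\in Z$ and claiming the constrained fiber collapses to a point) is in fact the paper's route, but the collapse is precisely the $H^0(\tilde f^*L(-\sum_i x_i))=0$ argument above and must be stated, not assumed.
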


\begin{proof}
Choose a Hermitian metric and a unitary connection on $L$ such
that they induce a splitting
$$
   0\longrightarrow V\longrightarrow TY \longrightarrow \pi^*TZ
   \longrightarrow 0,
$$
where $V$ is the vertical tangent bundle. We choose a metric of
$TY$ as the direct sum of a metric on $V$ and $\pi^*TZ$, where the
second one is induced from a metric on $Z$. The Levi-Civita
connection is a direct sum. Therefore, we may choose almost
complex structures $J_Z$ on $TZ$ and $J_V$  on $V$ such that we
may choose the direct sum $J_Z\oplus J_V$ as an almost complex
structure $J_Y$ on $TY$. It is easy to see that $\bar{\partial}$
commutes with $\pi$.

>From Lemma \ref{lemma3-2}, we know that the projection $\pi:
Y\longrightarrow Z$ induces a smooth map $\pi_{{\mathcal S}_e}:
U^{Y,D}_{{\mathcal S}_e}\longrightarrow U^Z_{{\mathcal S}_e}$ on
virtual neighborhoods and the obstruction bundle $E_{Y,D}$ over
$U^{Y,D}_{{\mathcal S}_e}$ is the pullback of the obstruction
bundle $E_Z$ over $U^Z_{{\mathcal S}_e}$. Therefore, by the
definition of Gromov-Witten invariants, we have
\begin{eqnarray}
& &\langle \varpi\beta_1\cdot [Z],\beta_2\cdot [Z],\cdots,
 \beta_k\cdot [Z]\mid\emptyset\rangle^{Y,D}_{0,A}\nonumber\\
& =&\deg (\pi_{{\mathcal S}_e})\langle i^*\varpi, \beta_1,\cdots,
 \beta_k\rangle^Z_{0,A}.
\end{eqnarray}

We claim that $\deg (\pi_{{\mathcal S}_e})=1$.

In fact, by the construction of virtual neighborhoods, we know
that for every generic element $({\mathbb P}^1, x_1,\cdots,
x_{k+l}, \tilde{f})\in U^Z_{{\mathcal S}_e}$, there is a section
$\nu$ of the obstruction bundle $E_Z$ such that $
    \bar{\partial}_{J_Z}\tilde{f} = \nu.
$

Suppose that a generic element $({\mathbb P}^1,x_1,\cdots,
x_{k+l}, f)\in U^{Y,D}_{{\mathcal S}_e}$ is a preimage of
$({\mathbb P}^1,x_1,\cdots, x_{k+l},\tilde{f})$ under
$\pi_{{\mathcal S}_e}$. That is, $f$ is a lifting of $\tilde{f}$
to $Y$ vanishing at the marked points $x_1,\cdots, x_k$.
Therefore, from the fact that $\bar{\partial}$ commutes with
$\pi$, we have that $({\mathbb P}^1,x_1,\cdots,x_{k+l}, f)$
satisfies
\begin{equation}\label{CR}
\bar{\partial}_{J_Y} f = \pi^*_{{\mathcal S}_e}\nu.
\end{equation}
If we choose a local coordinate $(z, s)$ on $Y$, where $s$ is the
Euclidean coordinate on the fiber ${\mathbb P}^1$, then locally we
may write $f = (\tilde{f}, f^V)$. Therefore (\ref{CR}) locally can
be written as
\begin{equation}\label{CR-1}
   \left\{\begin{array}{lcl}
    \bar{\partial}_{J_Z}\tilde{f} &=& \nu,\\
     & &\\
    \bar{\partial}_{J_V}f^V &=& 0
    \end{array}\right.
\end{equation}

Since $\bar{\partial}^2 = 0$ always holds on ${\mathbb P}^1$, it
follows from a well-known fact of complex geometry that $f^*L$ is
a holomorphic line bundle over ${\mathbb P}^1$. Moreover,
(\ref{CR-1}) shows that $f^V$ gives rise to a holomorphic section
of the bundle $f^*L$, up to ${\mathbb C}^*$, which vanishes at the
marked points $x_1,\cdots,x_k$. Since $\deg (f^*L) = Z\cdot A$,
therefore, from our assumption that $k=Z\cdot A +1$, we know that
$f^*L\otimes (-x_1-\cdots - x_k)$ has no nonzero holomorphic
sections. Therefore, $f^V\equiv 0$. This says that the only
preimage of a generic element $({\mathbb P}^1, x_1,\cdots,
x_{k+l}, \tilde{f})$ in $U^Z_{{\mathcal S}_e}$ is itself. This
implies $\deg (\pi_{{\mathcal S}_e}) = 1$. This proves the
theorem.

\end{proof}

\section{ A comparison theorem }

Let $X$ be a compact symplectic manifold and $Z\subset X$ be a
smooth symplectic submanifold of codimension $2$. $\iota :
Z\longrightarrow X$ is the inclusion map. The cohomological
push-forward
$$
\iota^! : H^*(Z, {\mathbb R}) \longrightarrow H^*(X,{\mathbb R})
$$
is determined by the pullback $\iota^*$ and Poincar\'e duality.

\begin{definition}
A symplectic divisor $Z$ is said to be {\em positive} if for some
tamed almost complex structure $J$, $C_1(N_Z)(A)>0$ for any $A$
represented by a non-trivial $J$-sphere in $Z$.
\end{definition}

This is a generalization of ample divisor from algebraic geometry.
Define
\begin{equation}\label{min-number}
  V := \min\{C_1(N_{Z|X})(A)>0\mid A\in H_2(Z,{\mathbb Z})\,\mbox{is a stably effective class}\}.
\end{equation}

In \cite{MP}, the authors point out that the relative
Gromov-Witten theory of $(X,Z)$ does not provide new invariants:
the relative Gromov-Witten theory of $(X,Z)$ is completely
determined by the absolute Gromov-Witten theory of $X$ and $Z$ in
principle. In this section, under some positivity assumptions on
the normal bundle of the divisor, we will give an explicit
relation between the absolute and relative Gromov-Witten
invariants, which we call as a comparison theorem. The main tool
of this section is the degeneration formula of Gromov-Witten
invariants. The central theorem of this section is

\begin{theorem}\label{comparison}Suppose that $Z$ is a positive
divisor and  $V\geq  l$.  Then for  $A\in H_2(X,{\mathbb Z})$,
$\alpha_i\in H^*(X,{\mathbb R})$, $ 1\leq i\leq \mu$, and
$\beta_j\in H^*(Z,{\mathbb R})$, $1\leq j\leq l$, we have
\begin{eqnarray*}
&&\langle \alpha_1, \cdots,  \alpha_\mu,
 \iota^!(\beta_1), \cdots,  \iota^!(\beta_l)
 \rangle ^X_A\\
&=&  \sum_{\mathcal T}\langle  \alpha_1,\cdots, \alpha_\mu\mid
{\mathcal T}\rangle_A^{X,Z},
\end{eqnarray*}
where the summation runs over all possible weighted partitions
${\mathcal T}= \{(1, \gamma_1)$, $\cdots, (1,\gamma_q),
(1,[Z]),\cdots, (1,[Z])\}$  where $\gamma_i$'s are the products of
some $\beta_j$ classes.
\end{theorem}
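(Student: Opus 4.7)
The plan is to apply the degeneration formula of Section \ref{df} to the symplectic cut of $X$ along the boundary of a small tubular neighborhood of $Z$, producing the degeneration $X \rightsquigarrow X\cup_Z Y$ with $Y=P_Z=\mathbb{P}(N_{Z|X}\oplus\mathbb{C})$; here $Z$ becomes the zero section $Z_0\subset Y$ and the new relative divisor on the $Y$ side is the infinity section $D$. I split each insertion across the cut by taking $\alpha_i^+=\alpha_i$ on $X$ and $\alpha_i^-=\pi^*(\iota^*\alpha_i)$ on $Y$ (which agree on $Z$ since $\pi\circ\iota=\mathrm{id}_Z$), and by representing each $\iota^!(\beta_j)$ by a Thom form of $N_{Z|X}$ multiplied by $\beta_j$ and supported in a shrinking tubular neighborhood of $Z$. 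After the cut this Thom representative is carried entirely onto the $Y$ side as $\beta_j\cdot[Z_0]$, so equivalently (by Remark \ref{D}) the $(\mu+j)$-th marked point sits on the $Y$ side with insertion $\beta_j\cdot[Z_0]$. Under $V\geq l$ and positivity of $Z$, I expect all but a very restricted class of degeneration graphs to vanish, and those that survive to assemble exactly into the stated right-hand side.

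The crux of the argument is the vanishing of every $Y$-side connected component of nonzero horizontal degree. Suppose such a component carries class $B\in H_2(Y,\mathbb{Z})$ with $\pi_*(B)\neq 0$. By the Maulik--Pandharipande argument recalled in the introduction, $\pi_*(B)=\sum_i a_iA_i$ with $a_i\in\mathbb{Z}_+$ and each $A_i$ stably effective in $Z$, so
\[
Z^*(B) \;\geq\; c_1(N_{Z|X})\bigl(\pi_*(B)\bigr) \;=\; \sum_i a_i\,c_1(N_{Z|X})(A_i) \;\geq\; V \;\geq\; l.
\]
Each $\iota^!(\beta_j)$ insertion has descendent exponent $d_j=1$ and at most $l$ of them can sit on this component, so Proposition \ref{prop3-1} applies (its remaining hypothesis $c_1(N_{Z|X})(C)\geq 0$ on $J$-spheres $C\subset Z$ follows from positivity of $Z$) and the component's invariant vanishes. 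The same Proposition kills non-fiber components carrying a nontrivial $\alpha_i^-=\pi^*(\iota^*\alpha_i)$ insertion, while Proposition \ref{prop3-5}(i) kills fiber components carrying any $\pi^*$-type insertion; hence on every surviving graph all $\alpha_i$ marked points lie on the $X$ side and every $Y$-side connected component has $\pi_*(B)=0$.

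After these vanishings each surviving $Y$-side component is a multiple $sF$ of the fiber class, and Proposition \ref{prop3-5}(i) forces $s=1$, exactly one relative marked point of tangency $1$ on $D$, and no $\pi^*$ insertions, while allowing any subset of the $\iota^!(\beta_j)$'s. Thus the $Y$-side graph is a disjoint union of $k:=Z\cdot A$ fiber components, all $X$-side relative tangencies equal $1$, and the set $\{1,\dots,l\}$ partitions as $S_1\sqcup\cdots\sqcup S_k$, where $S_\alpha$ records which $\iota^!(\beta_j)$'s sit on the $\alpha$-th fiber. By Proposition \ref{prop3-5}(iii), paired with the dual-basis class $\gamma_\alpha^\vee$ at its relative tail, the $\alpha$-th fiber contributes
\[
\int_Z \Bigl(\prod_{j\in S_\alpha}\beta_j\Bigr)\wedge \gamma_\alpha^\vee,
\]
and summing over the self-dual basis of $H^*(Z)$ substitutes $\gamma_\alpha:=\prod_{j\in S_\alpha}\beta_j$ into the $\alpha$-th relative slot on the $X$ side, with $\gamma_\alpha=[Z]$ (the identity class in $H^0(Z)$) when $S_\alpha=\emptyset$. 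Summing over all set-partitions $\{S_\alpha\}$ then produces the claimed sum over weighted partitions $\mathcal{T}=\{(1,\gamma_1),\dots,(1,\gamma_q),(1,[Z]),\dots,(1,[Z])\}$.

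The main obstacle will be the combinatorial bookkeeping: the factor $\Delta(\mathcal{T}_k)=\prod t_j\,|\mathrm{Aut}(T_k)|$ from the degeneration formula, the $1/|\mathrm{Aut}(T_k)|$ built into the definition of the relative invariant, and the number of set-partitions $\{S_\alpha\}$ collapsing to the same weighted partition $\mathcal{T}$ must combine so that each $\mathcal{T}$ appears with final coefficient $1$. A secondary technical point is fitting the Thom-form representative of $\iota^!(\beta_j)$ into the cup-product splitting $\pi^*(\alpha^+\cup_Z\alpha^-)$ demanded by the MP formalism, which should follow from a standard limiting argument as the tubular neighborhood shrinks.
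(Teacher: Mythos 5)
Your proposal follows essentially the same route as the paper's proof: symplectic cutting along the boundary of a tubular neighborhood of $Z$ with the $\iota^!(\beta_j)$ supported near $Z$ so they land on the $(Y,D)$ side, the degeneration formula, Proposition \ref{prop3-1} together with $V\geq l$ (via stable effectivity of $\pi_*(B)$ and $D\cdot B\geq 0$) to kill every non-fiber $Y$-side component, and Proposition \ref{prop3-5} to reduce the surviving fiber components to $\langle \iota^!(\beta_{i_1}),\cdots,\iota^!(\beta_{i_t})\mid (1,\gamma)\rangle^{Y,D}_F$ and $\langle\,\mid (1,[pt])\rangle^{Y,D}_F$, which reassemble into the stated sum over weighted partitions $\mathcal T$. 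The combinatorial bookkeeping you flag as the remaining obstacle is treated no more explicitly in the paper, which at that point simply writes down the final summation, so your plan matches the paper's argument in both structure and level of detail.
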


\begin{proof}
We perform the symplectic cutting along the boundary of a tubular
neighborhood of $Z$. Then we have $\overline{X}^- = X$,
$\overline{X}^+ = {\mathbb P}(N_{Z|X}\oplus {\mathbb C})$. Since
$\beta_i\in H^*(Z,{\mathbb R})$, we choose the support of
$\iota^!(\beta_i)$ near $Z$. Then, $\iota^!(\beta_i)^- = 0$,
$\iota^!(\beta_i)^+ = \iota^!(\beta_i)$. Here $\iota$ in the
second term is understood as the inclusion map of $Z$ via the zero
section into $Y={\mathbb P}(N_{Z|X}\oplus {\mathbb C})$. Up to a
rational multiple, each $\alpha_i$ is Poincar\'e dual to an
immersed submanifold $W_i$. We can perturb $W_i$ to be transverse
to $Z$. In a neighborhood of $Z$, $W_i$ is $\pi^{-1}(W_i\cap Z)$,
where $\pi: N_{Z|X} \longrightarrow Z$ is the projection. Clearly,
$\pi$ induces the projection  ${\mathbb P}(N_{Z|X}\oplus {\mathbb
C})\longrightarrow Z$, still denoted by $\pi$. The symplectic
cutting naturally decomposes $W_i$ into $W^-_i = W_i$, $W^+_i =
h^*(\alpha_i|_Z) = h^*\tilde{\alpha}_i$. In other words, we can
choose $\alpha_i^- = \alpha_i$, $\alpha^+_i = h^*(\alpha_i|_Z)$.

Now we apply the degeneration formula for invariants
$$
\langle \alpha_1,\cdots, \alpha_\mu,
 \iota^!(\beta_1),\cdots, \iota^!(\beta_l)
 \rangle_A^X
$$
and express it as a summation of products of relative invariants
of $(X,Z)$ and $(Y,D)$. Moreover, from the degeneration formula,
each summand $\Psi_C$ may consist of a product of relative
Gromov-Witten invariants with disconnected domain curves of both
$(X,Z)$ and $(Y,D)$.

On the side of  $Y$, there may be several disjoint components. Let
$A'$  be the total homology class. Then, from our assumption, we
have  $Z\cdot A' = Z\cdot A \geq d$. Suppose that we have a
nonzero summand $\Psi_C\not= 0$. We claim that each factor from
the relative Gromov-Witten invariants of $(Y,D)$ must be in the
form $\langle  \iota^!(\beta_{i_1}),\cdots, \iota^!(\beta_{i_t})|
(1,\gamma)\rangle^{Y,D}_F $  or $\langle\,|
(1,[pt])\rangle^{Y,D}_{F} $,where $F$ is the homology class of a
fiber of $Y$ and $\gamma$ is a basis element of $H^*(D,{\mathbb
R})$ such that
$\int_Z\beta_{i_1}\wedge\cdots\wedge\beta_{i_t}\wedge\gamma\not=0
$. Note that for these components, $Z^*(sF) =s$. From our
assumption $V\geq d$ and Proposition \ref{prop3-1}, the nonzero
factor of the relative Gromov-Witten invariants of $(Y,D)$ must be
the fiber class relative invariants. From Proposition
\ref{prop3-5}, we know that the nonzero factor must be of the form
$\langle  \iota^!(\beta_{i_1}),\cdots, \iota^!(\beta_{i_t})|
(1,\gamma)\rangle^{Y,D}_F $  or $\langle\,|
(1,[pt])\rangle^{Y,D}_{F} $. Moreover, if some $\beta_i = [pt]$,
then the marked point must be in a two-point component and the
nonzero relative invariant must be $\langle \iota^!([pt])|(1,
[Z])\rangle^{Y,D}_F =1$.

 Since there are no vanishing two-cycles in this case,
 we may write down the summation as follows.
\begin{eqnarray*}
& & \langle  \alpha_1, \cdots,
  \alpha_\mu,  \iota^!(\beta_1),\cdots,
  \iota^!(\beta_l)\rangle^X_A\\
& = & \sum_{\mathcal T}\langle  \alpha_1,\cdots, \alpha_\mu\mid
{\mathcal T}\rangle_A^{X,Z},
\end{eqnarray*}
 where the summation runs over all possible weighted partitions
${\mathcal T}= \{(1, \gamma_1)$, $\cdots, (1,\gamma_q),
(1,[Z]),\cdots, (1,[Z])\}$  where $\gamma_i'$ are the product of
some $\beta_j$ classes. We complete the proof of our comparison
theorem.

\end{proof}

\begin{cor}
Under the assumption of Theorem \ref{comparison}. If the product
of any two $\beta_j$ classes vanishes, then we have
$$
\langle \alpha_1,\cdots,\alpha_\mu,
\iota^!(\beta_1),\cdots,\iota^!(\beta_l)\rangle^X_A = \langle
\alpha_1, \cdots, \alpha_l\mid {\mathcal T}\rangle^{X,Z}_A,
$$
where ${\mathcal T} = \{(1,\beta_1),\cdots,
(1,\beta_l),(1,[Z]),\cdots,(1,[Z])\}$ is a weighted partition of
$Z\cdot A$.
\end{cor}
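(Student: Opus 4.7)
The plan is to derive this corollary immediately from Theorem \ref{comparison} by isolating, among the terms in that sum, those that are forced to vanish under the additional hypothesis that $\beta_i \cup \beta_j = 0$ for all $i \neq j$.

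Concretely, I would first write out the right-hand side of Theorem \ref{comparison}. Each weighted partition appearing there has the form
\[
{\mathcal T} = \{(1,\gamma_1),\ldots,(1,\gamma_q),(1,[Z]),\ldots,(1,[Z])\},
\]
where each $\gamma_i$ is a cup product of some nonempty subset $S_i \subset \{\beta_1,\ldots,\beta_l\}$, and where the subsets $S_1,\ldots,S_q$ form a set partition of $\{\beta_1,\ldots,\beta_l\}$ (so that every $\beta_j$ is accounted for exactly once). The remaining slots carry $[Z]$ as their cohomology weight, and the total number of parts is $Z\cdot A$.

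Next I would observe that whenever some $S_i$ has cardinality at least $2$, the class $\gamma_i$ is a product of at least two distinct $\beta_j$'s, hence vanishes by hypothesis; thus the relative invariant $\langle\alpha_1,\ldots,\alpha_\mu \mid {\mathcal T}\rangle^{X,Z}_A$ vanishes (an insertion is zero). Consequently the only partition that can contribute is the one in which every $S_i$ is a singleton. That is precisely the partition
\[
{\mathcal T}_0 = \{(1,\beta_1),\ldots,(1,\beta_l),(1,[Z]),\ldots,(1,[Z])\},
\]
with $Z\cdot A - l$ copies of $(1,[Z])$; note that $Z\cdot A \geq V \geq l$ by the standing hypothesis, so this partition is well-defined. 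Substituting this single surviving term into the sum from Theorem \ref{comparison} yields the claimed identity.

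I do not expect any genuine obstacle here: the whole content of the corollary is the vanishing observation on the $\gamma_i$-insertions, and everything else is bookkeeping with the sum provided by Theorem \ref{comparison}. The only mild point to verify carefully is that the partitions indexing Theorem \ref{comparison}'s sum do indeed distribute \emph{all} of the $\beta_j$'s among the $\gamma_i$'s (so that no $\beta_j$ is ``lost'' into a $[Z]$-slot), which follows directly from how that theorem was proved via the degeneration formula with $\iota^!(\beta_j)^- = 0$ and $\iota^!(\beta_j)^+ = \iota^!(\beta_j)$.
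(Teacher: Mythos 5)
Your argument is correct and is exactly how the paper intends the corollary to be read: it is stated without proof as an immediate consequence of Theorem \ref{comparison}, since under the hypothesis every partition whose $\gamma_i$ involves two or more $\beta_j$'s has a vanishing insertion, leaving only the all-singleton partition ${\mathcal T}_0$. Your closing check that every $\beta_j$ must appear among the $\gamma_i$'s (because $\iota^!(\beta_j)^-=0$ forces all $\beta$-insertions to the $(Y,D)$ side in the degeneration) matches the mechanism in the paper's proof of the theorem, so nothing further is needed.
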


\section{Rationally connected symplectic divisors}

\subsection{Rationally connectedness in algebraic geometry} The basic reference for this
subsection is \cite{A}. We refer to \cite{C, D, K, KMM1, KMM2, V}
for more details.

Let us recall the notion of rational connectedness in algebraic
geometry.
\begin{definition}
Let $X$ be a smooth complex projective variety of positive
dimension. We say that $X$ is rationally connected if one of  the
following equivalent conditions holds.
\begin{enumerate}
\item Any two points of $X$ can be connected by a rational curve
(called as rationally connected).

\item Two general points of $X$ can be  connected by a chain of
rational curves (called as rationally chain-connected).

\item Any finite set of points in $X$ can be connected by a
rational curve.

\item Two general points of $X$ can be connected by a very free
rational curve. Here we say that a rational curve $C\subset X$ is
a very free curve if there is a surjective morphism $f: {\mathbb
P}^1\longrightarrow C$ such that
$$
  f^*X \cong \oplus_{i=1}^{\dim X}{\mathcal O}_{{\mathbb
  P}^1}(a_i), \,\,\,\,\, \mbox{with all} \,\, a_i \geq 1.
$$

\end{enumerate}

\end{definition}

Next let us look at some properties of rationally connected
varieties.
\begin{prop} The following properties of rationally connected manifolds hold:
\begin{enumerate}
\item Rationally connectedness is a birational invariant.

\item Rationally connectedness is invariant under smooth
deformation.

\item If $X$ is rationally connected, then
$H^0(X,(\Omega^1_X)^{\otimes m})=0$ for every $m\geq 1$.

\item Fano varieties (i.e., smooth complex projective varieties
$X$ for which $-K_X$ is ample) are rationally connected. In
particular, smooth hypersurfaces  of degree $d$ in ${\mathbb P}^n$
are rationally connected for $d\leq n$.

\item Rationally connected varieties are well behaved under
fibration, i.e.:  Let $X$ be s smooth complex projective variety.
Assume that there exists a surjective morphism $f:X\longrightarrow
Y$  with $Y$ and the general fiber of $f$ rationally connected.
Then $X$ is rationally connected.

\end{enumerate}
\end{prop}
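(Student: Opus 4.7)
The plan is to establish each of the five properties by standard arguments from the theory of rationally connected varieties as developed by Koll\'ar--Miyaoka--Mori and Campana; since all of these are classical, my proposal focuses on the key mechanism behind each item and indicates where to appeal to the references \cite{K, KMM1, KMM2, C}.

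First I would treat (1) and (2) simultaneously using the characterization of rational connectedness by the existence of a very free rational curve (condition (4) of the preceding definition). For (1), given a birational map $\varphi\colon X\dashrightarrow X'$ between smooth projective varieties, I would take a very free curve $f\colon \mathbb{P}^1\to X$ through a general point and show that its strict transform under $\varphi$ remains very free. The key point is that the locus where $\varphi$ fails to be an isomorphism has codimension $\geq 2$, so for a general very free curve the image avoids the indeterminacy locus, and freeness is preserved because the normal bundle splitting with all $a_i\geq 1$ is open in deformations. For (2), given a smooth proper family $\mathcal{X}\to B$, the existence of a very free rational curve in one fiber spreads out in families: deformations of a very free $f$ remain unobstructed since $H^1(\mathbb{P}^1, f^*T_X)=0$, so the relative Hom-scheme is smooth over $B$ at $[f]$, and hence a very free curve exists on nearby fibers.

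For (3), the argument uses that on a rationally connected variety one has a very free curve $f\colon \mathbb{P}^1\to X$ through a general point. Any global section $\sigma\in H^0(X,(\Omega^1_X)^{\otimes m})$ pulls back to a section of $(f^*\Omega^1_X)^{\otimes m}\cong \bigoplus_i\mathcal{O}_{\mathbb{P}^1}(-a_i)^{\otimes m\text{ terms}}$; since every $a_i\geq 1$, each summand of the tensor power has strictly negative degree and thus no global sections. Hence $\sigma$ vanishes on a Zariski dense set of curves through general points, so $\sigma\equiv 0$.

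For (4), I would invoke the Koll\'ar--Miyaoka--Mori theorem that Fano varieties are rationally chain connected, combined with the fact that rational chain-connectedness implies rational connectedness for smooth projective varieties in characteristic zero (via smoothing combs of rational curves). The hypersurface statement is then immediate from the adjunction formula $-K_X=(n+1-d)H|_X$ being ample for $d\leq n$. For (5), I would use Graber--Harris--Starr: given $f\colon X\to Y$ with $Y$ and the general fiber rationally connected, a general pair of points in $X$ can be joined by first projecting to $Y$, connecting the images by a rational curve $C\subset Y$, and then using the fact that the restriction $X_C\to C$ has rationally connected general fiber to connect the two lifted points inside $X_C$ by a rational curve (this is precisely the content of the GHS theorem). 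The main obstacle in such a proposal is (4) and (5), which genuinely rely on the deep results of Koll\'ar--Miyaoka--Mori and Graber--Harris--Starr, respectively; the remaining parts reduce to elementary deformation-theoretic observations about very free curves.
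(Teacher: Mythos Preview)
Your proposal is correct, but it is worth noting that the paper does not actually prove this proposition: it is stated as a summary of background facts with a blanket reference to \cite{A, C, D, K, KMM1, KMM2, V}, and the text moves on immediately afterward. So you have supplied considerably more than the paper does.

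Your sketches are the standard ones and are sound. A couple of minor remarks. In (1), the point that the indeterminacy locus of $\varphi$ has codimension $\geq 2$ refers to the source; once the very free curve is moved off that locus, the identification $f^*T_X\cong (\varphi\circ f)^*T_{X'}$ holds on all of $\mathbb{P}^1$, which is what gives very-freeness on $X'$, and the converse direction follows by applying the same argument to $\varphi^{-1}$. In (3), your argument shows $f^*\sigma=0$, hence $\sigma$ vanishes along the image of every very free curve; since such curves pass through a general point, $\sigma$ vanishes on a Zariski-dense set and therefore identically, exactly as you say. For (5) you correctly identify Graber--Harris--Starr as the essential input; the paper's reference list omits it, but the statement as written is precisely their theorem, so your attribution is the right one. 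Your description of the mechanism for (5) is more heuristic than a proof (the actual content of GHS is the existence of sections over curves, from which the connectedness statement is deduced), but as a proof plan pointing to the correct reference it is appropriate.
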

 An important theorem connecting birational geometry to
 Gromov-Witten theory is the result of Koll\'ar and Ruan
 \cite {K}, \cite{R1}: a uniruled projective manifold has a nonzero
genus zero GW-invariant with a point insertion. A longstanding
problem in Gromov-Witten theory is that a similar result with two
point insertions should also hold for rationally connected
projective manifolds.

\subsection{Rationally connected symplectic divisors} In
this subsection, we want to apply our comparison theorem to study
the $k$-point rationally connectedness properties. We will show
that a symplectic manifold is  $k$-point strongly rationally
connected if it contains a k-point strongly rationally connected
symplecitc divisor with sufficiently positive normal bundle. Our
main tool is the degeneration formula of Gromov-Witten invariants
and our comparison Theorem \ref{comparison}.

Before we state our main theorem, we want first to define the
notion of $k$-point rational connectedness.

\begin{definition}\label{rs-class}
Let $A\in H_2(X, {\mathbb Z})$ be a nonzero class. $A$ is said to
be a $k$-point rationally connected class if there is a nonzero
Gromov-Witten invariant
\begin{equation}\label{rcdef}
  \langle \tau_{d_1}[pt],\cdots,\tau_{d_k}[pt],\tau_{d_{k+1}}\alpha_{k+1},\cdots,
  \tau_{d_l}\alpha_l\rangle^X_A,
\end{equation}
where $\alpha_i\in H^*(X,{\mathbb R})$ and $d_1,\cdots, d_l$ are
non-negative integers. We call a class $A\in H_2(X, {\mathbb Z})$
a  $k$-point strongly rationally connected class if $d_i =0$,
$1\leq i\leq l$, in (\ref{rcdef}).
\end{definition}

\begin{definition}\label{rc}
 $X$ is said to be (symplectic) $k$-point (strongly ) rationally connected
 if there is a $k$-point (strongly) rationally connected class. We simply call a $2$-point (strongly)
 rationally
 connected symplectic manifold as (strongly) rationally connected symplectic
 manifold.
\end{definition}

\begin{remark}From the definition of uniruledness of \cite{HLR},
 a $1$-point rationally connected symplectic manifold is equivalent
 to a
 uniruled symplectic manifold.  From the definitions, we know that a $k$-point (strongly)
 rational connected symplectic manifold must be uniruled.
\end{remark}

\begin{remark} It is possible that $k$-point rational
connectedness is equivalent to $k$-point strongly rational
connectedness. We do not know how to prove this.
\end{remark}

\begin{example}
It is well-known that for any positive integer $k$, the projective
space ${\mathbb P}^n$ is $k$-point strongly rationally connected.
\end{example}

\begin{example}
Let $G(k,n)$ be the Grassmannian manifold of $k$-planes in
${\mathbb C}^n$. It is well-known that the classical cohomology of
$Gr(k,n)$ has a basis of Schubert classes $\sigma_\lambda$, as
$\lambda$ varies over partitions whose Young diagram fits in a $k$
by $n-k$ rectangle. The (complex) codimension of $\sigma_\lambda$
is $|\lambda|= \sum \lambda_i$, the number of boxes in the Young
diagram.  The quantum cohomology of the Grassmannian is a free
module over the polynomial ring ${\mathbb Z}[q]$, with a basis of
Schubert classes; the variable $q$ has (complex) degree $n$. The
quantum product $\sigma_\lambda \star \sigma_\mu$ is a finite sum
of terms $q^d\sigma_\nu$, the sum over $d\geq 0$ and $|\nu| =
|\lambda| + |\mu| - dn$, each occurring with a nonnegative
coefficient (a Gromov-Witten invariant). Denote by
$\rho=\sigma_{((n-k)^k)}$ the class of a point. In \cite{BCF}, the
authors proved that $\sigma_\rho \star \sigma_\rho =
q^k\sigma_{((n-2k)^k)}$ if $k\leq n-k$, and $\sigma_\rho \star
\sigma_\rho = q^{n-k}\sigma_{((n-k)^{2k-n})}$ if $n-k \leq k$.
This means that the Grassmannian $Gr(k,n)$ is symplectic
rationally connected.
\end{example}

\begin{example}
   For any integer $d\geq 0$, Consider the Grassmannian $G(d,2d)$.
   Buch-Kresch-Tamvakis \cite{BKT} proved that for three points
   $U, V, W\in G(d,2d)$ pairwise in general position, there is a
   unique morphism $f: {\mathbb P}^1\longrightarrow G(d,2d)$ of
   degree $d$ such that $f(0)= U$, $f(1)=V$ and $f(\infty)=W$.
   This implies that the Gromov-Witten invariant $\langle [pt],
   [pt],[pt]\rangle^{G(d,2d)}_d =1$. Therefore, $G(d,2d)$ are
   $3$-point strongly rationally connected.
\end{example}

\begin{example}
Let ${\mathbb H} = \mbox{Hilb}^2({\mathbb P}^1\times {\mathbb
P}^1)$ the Hilbert scheme of points on ${\mathbb P}^1\times
{\mathbb P}^1$. In \cite{P}, the author gave a ${\mathbb Q}$-basis
for $A*({\mathbb H})$ as follows: $T_0 = [{\mathbb H}]$, $T_1$,
$T_2$, $T_3$, $T_4$, $T_5=T_1T_2$, $T_6= T_1^2$, $T_7=T_2^2$,
$T_8=T_1T_3$, $T_9=T_2T_3$, $T_{10}=C_2 +F$, $T_{11}=C_1+F$,
$T_{12}=C_1+C_2+F$ and $T_{13}$ the class of a point. The author
also computed the quantum product $T_4 \star T_4 = T_{13} +
2q_1q_2q_3^2 T_0$ and $T_4\star T_4\star T_4=2q_1q_2q_3^2T_4$. So
we have $T_{13}\star T_{13} = -2q_1q_2q_3^2T_{13}\not= 0$. This
implies that ${\mathbb H}$ is symplectic rationally connected.
>From the computation of \cite{G}, it is easy to know that
$\mbox{Hilb}^2({\mathbb P}^2)$ also is symplectic rationally
connected.
\end{example}

Since Gromov-Witten invariants are invariant under smooth
symplectic deformations, $k$-point rational connectedness is
invariant under smooth symplectic deformations. It is not yet
known whether a projective rationally connected manifold is
symplectic rationally connected. Moreover, so far, we could not
show that this notion is invariant under symplectic birational
cobordisms defined in \cite{HLR}. We will leave this for future
research. However, we would like to mention some partial results
along this direction. From the blowup formula of Gromov-Witten
invariants in \cite{H1,H2,H3,HZ,La}, we have

\begin{prop}\label{blowup}
Suppose that $X$ is a $k$-point strongly rationally connected
symplectic manifold. Let $\tilde{X}$ be the blowup of $X$ along a
finite number of points or some special submanifolds with convex
normal bundles (see, \cite{H1,La}). Then $\tilde{X}$ is $k$-point
strongly rationally connected.
\end{prop}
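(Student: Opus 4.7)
The plan is to apply the blowup formula for Gromov-Witten invariants from \cite{H1, H2, H3, HZ, La} directly to the given nonzero invariant, and check that $k$-point strong rational connectedness is preserved under pullback. By hypothesis, there is a nonzero primary genus-zero invariant
\[
\langle [pt],\ldots,[pt], \alpha_{k+1},\ldots,\alpha_l\rangle^X_A \neq 0
\]
with exactly $k$ point insertions and classes $\alpha_i \in H^*(X,\mathbb{R})$. Write $\pi \colon \tilde X \to X$ for the blowdown map and $E$ for the exceptional divisor.

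First, I would arrange the point insertions to be represented by points chosen generically with respect to the blowup center. Since the center has positive codimension, a generic point $p \in X$ has $\pi^{-1}(p)$ equal to a single point of $\tilde X \setminus E$, so that $\pi^*[pt_X] = [pt_{\tilde X}]$ as cohomology classes. This lets each absolute point insertion on $X$ be interpreted as a genuine point insertion on $\tilde X$ after pullback, and each $\alpha_i$ is replaced by $\pi^*\alpha_i \in H^*(\tilde X, \mathbb{R})$.

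Second, the blowup formula in the cited references states that for the appropriate lift $\tilde A \in H_2(\tilde X, \mathbb{Z})$ of $A$ (represented by the proper transform of a curve in class $A$ chosen generic relative to the center), one has
\[
\langle [pt_{\tilde X}],\ldots,[pt_{\tilde X}], \pi^*\alpha_{k+1},\ldots,\pi^*\alpha_l\rangle^{\tilde X}_{\tilde A} = \langle [pt],\ldots,[pt], \alpha_{k+1},\ldots,\alpha_l\rangle^X_A \neq 0.
\]
This exhibits $\tilde A$ as a $k$-point strongly rationally connected class on $\tilde X$, proving that $\tilde X$ is $k$-point strongly rationally connected.

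The main obstacle is the precise invocation of the blowup formula in the case of blowups along submanifolds with convex normal bundle: one must verify that the formula takes primary invariants to primary invariants (no descendant classes are generated in the lift) and that no spurious exceptional-class contributions cancel the desired term. For blowups at finitely many points this comparison is essentially classical and is explicit in \cite{H1, H2}; for the convex-submanifold case, this requires the finer form of the blowup correspondence from \cite{H1, La}, and this is precisely why the statement is restricted to these special centers rather than arbitrary symplectic blowups.
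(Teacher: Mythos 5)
Your argument is essentially the paper's own: the proposition is stated there as an immediate consequence of the blowup formula of \cite{H1,H2,H3,HZ,La} (no separate proof is given), and your deduction --- pulling back the point and cohomology insertions, noting $\pi^*[pt]=[pt]$, and applying the blowup correspondence for the lifted class $\tilde A$ so that a nonzero primary invariant with $k$ point insertions on $X$ yields one on $\tilde X$ --- is precisely the intended argument. Your closing caveat, that the convex-normal-bundle case needs the finer correspondence of \cite{H1,La} and that this is why the statement is restricted to these special centers, matches the paper's framing as well.
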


In 1991, McDuff \cite{M3} first observed that a semi-positive
symplectic $4$-manifold, which contains a submanifold $P$
symplectomorphic to ${\mathbb P}^1$ whose normal Chern number is
non-negative, must be uniruled. In \cite{LtjR}, the authors
generalize McDuff's result to more general situations. More
importantly, they gave a rather general {\it from divisor to
ambient space} inductive construction of uniruled symplectic
manifolds. In this subsection, we will generalize their inductive
construction to the case of rationally connected symplectic
manifolds.

Suppose that $X$ is a compact symplectic manifold and $Z\subset X$
is a symplectic submanifold of codimension $2$. Denote by
$N_{Z|X}$ the normal bundle of $Z$ in $X$. Denote by $\iota
:Z\subset X$ the inclusion of $Z$ into $X$. Let $V$ be the minimal
 normal Chern number defined in (\ref{min-number}).  We call a class $A\in H_2(Z,
{\mathbb R})$ a {\bf  minimal class} if $Z\cdot A =V$.

\begin{theorem}\label{rcsd}
Suppose that $X$ is a compact symplectic manifold and $Z\subset X$
is a symplectic submanifold of codimension $2$.  If  $Z$ is
k-point strongly rationally connected and $A\in H_2(Z,{\mathbb
Z})$ is a minimal class such that
\begin{equation}\label{rcsd-1}
  \langle \iota^*\alpha_1,\cdots, \iota^*\alpha_l,[pt],\cdots,[pt], \beta_{k+1},\cdots,\beta_r \rangle^Z_A \not=0
\end{equation}
for some $r\leq  V+1$, $\beta_i\in H^*(Z,{\mathbb R})$ and
$\alpha_j\in H^*(X,{\mathbb R})$, then $X$ is k-point strongly
rationally connected.  In particular, if $\iota : Z\rightarrow X$
is homologically injective, then $X$ is $k$-point strongly
symplectic rational connected if $k\leq V+1$.

\end{theorem}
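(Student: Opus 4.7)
The plan is to construct an explicit nonzero GW invariant of $X$ with $k$ point insertions; since $\iota^!([pt]_Z) = [pt]_X$, the natural candidate lifted from (\ref{rcsd-1}) is
\begin{equation*}
I := \langle \alpha_1, \ldots, \alpha_l, \iota^!([pt]_Z), \ldots, \iota^!([pt]_Z), \iota^!(\beta_{k+1}), \ldots, \iota^!(\beta_r) \rangle^X_{\iota_*(A)}
\end{equation*}
with $k$ copies of $\iota^!([pt]_Z)$. It suffices to show $I \neq 0$, as this produces a nonzero GW invariant of $X$ with $k$ point insertions.

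I would evaluate $I$ by applying the degeneration formula under the symplectic cut along the boundary of a tubular neighborhood of $Z$, giving $\overline{X}^- = X$ and $\overline{X}^+ = Y = \mathbb{P}(N_{Z|X} \oplus \mathbb{C})$. As in the proof of Theorem \ref{comparison}, each $\iota^!(\cdot)$ class is supported near $Z$ and hence placed on the $Y$-side, while each $\alpha_i$ splits with $\alpha_i^- = \alpha_i$ and $\alpha_i^+ = \pi^*\iota^*\alpha_i$. The distinguished term to isolate is the one with trivial class on the $X$-side, class $(\iota_Y)_*(A)$ on the $Y$-side (where $\iota_Y : Z \hookrightarrow Y$ is the zero section), empty relative partition (consistent with $(\iota_Y)_*(A) \cdot [D] = 0$ since the zero and infinity sections of $Y$ are disjoint), and all insertions placed on the $Y$-side. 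The $Y$-side factor of this term equals
\begin{equation*}
\langle \pi^*\iota^*\alpha_1, \ldots, \pi^*\iota^*\alpha_l, [pt]\cdot[Z], \ldots, [pt]\cdot[Z], \beta_{k+1}\cdot[Z], \ldots, \beta_r\cdot[Z] \mid \emptyset \rangle^{Y,D}_{0, A},
\end{equation*}
and since $A$ is minimal with $Z \cdot A = V$, Theorem \ref{divisor-invariant} applies exactly when $r = V + 1$ and collapses this factor to the hypothesised nonzero invariant (\ref{rcsd-1}) on $Z$, using $\iota^*\pi^* = \mathrm{id}$ on $H^*(Z)$.

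The remaining analytic task is to rule out cancellation by the other degeneration terms. For any component on the $Y$-side carrying a non-fiber class $A' \neq (\iota_Y)_*(A)$, stable effectivity forces $Z^*(A') > V \geq r - 1$, so Proposition \ref{prop3-1} makes the corresponding relative factor vanish; the residual fiber-class contributions are governed by Proposition \ref{prop3-5} and, by the packaging argument in the proof of Theorem \ref{comparison}, do not interfere with the distinguished class-distribution. For the complementary range $r \leq V$, the hypothesis $V \geq l = r$ of Theorem \ref{comparison} is satisfied, and one can either reduce to the $r = V + 1$ case by divisor-axiom padding of (\ref{rcsd-1}), or apply Theorem \ref{comparison} directly and pursue the parallel analysis of the resulting relative invariants $\langle \cdots \mid \mathcal{T} \rangle^{X,Z}_{\iota_*(A)}$.

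For the second statement, homological injectivity of $\iota$ ensures $\iota_*(A) \neq 0$ in $H_2(X)$, and the $k$-point strong rational connectedness of $Z$ together with $k \leq V + 1$ directly supplies a nonzero $Z$-invariant of the form (\ref{rcsd-1}) (take $r = k$ and no extra $\beta$-insertions), so the main statement applies. The principal obstacle is the cancellation-free verification at the boundary value $r = V + 1$: this is precisely where Theorem \ref{comparison} fails by one, the non-fiber minimal-class contribution on the $Y$-side genuinely appears, and one must carefully confirm via the combined use of Propositions \ref{prop3-1}, \ref{prop3-5} and Theorem \ref{divisor-invariant} that it remains the unique source of non-vanishing.
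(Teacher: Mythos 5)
There is a genuine gap at the crucial step. Your strategy is to prove that the lifted absolute invariant $I=\langle \alpha_1,\ldots,\alpha_l,\iota^!([pt]),\ldots,\iota^!([pt]),\iota^!(\beta_{k+1}),\ldots,\iota^!(\beta_r)\rangle^X_{\iota_*(A)}$ is itself nonzero, by isolating the degeneration term in which the whole curve sits in $Y$ with class $A$ in the zero section and empty relative partition, identifying that term with the hypothesized nonzero invariant (\ref{rcsd-1}) via Theorem \ref{divisor-invariant}, and then asserting that the remaining terms ``do not interfere.'' But Propositions \ref{prop3-1} and \ref{prop3-5} only kill the $Y$-side factors carrying non-fiber classes; they leave alive all mixed terms of the form $C_\mu\,\langle \alpha_1,\ldots,\alpha_l\mid\mu\rangle^{X,Z}_A$ in which a nontrivial relative invariant of $(X,Z)$ is paired with nonzero fiber-class invariants of $(Y,D)$ (these fiber factors are exactly the nonzero quantities computed in Proposition \ref{prop3-5}). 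Nothing in the available results controls the sign or size of the $(X,Z)$-side relative invariants, so these terms can cancel your distinguished term, and $I$ may genuinely vanish. Indeed the paper's own proof explicitly splits into cases and assumes the invariant (\ref{x-invariant}) is zero; your final paragraph names this obstacle but the tools you invoke cannot close it, since none of them says anything about the relative invariants of $(X,Z)$.

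The missing idea is to exploit the cancellation rather than exclude it: if $I=0$, the degeneration identity forces $\sum_\mu C_\mu\langle \alpha_1,\ldots,\alpha_l\mid\mu\rangle^{X,Z}_A=-\langle \iota^*\alpha_1,\ldots,\iota^*\alpha_l,[pt],\ldots,[pt],\beta_{k+1},\ldots,\beta_r\rangle^Z_A\neq 0$, so some relative invariant with $k$ point entries in the partition is nonzero. One then chooses the minimal nonzero term $\langle \alpha_1,\ldots,\alpha_l\mid\mu_0\rangle^{X,Z}_A$ with respect to the partial order of Definition \ref{order} (minimality also forces the products $\gamma_i\gamma_j$ appearing in $\mu_0$ to vanish), and feeds the absolute invariant $\langle \alpha_1,\ldots,\alpha_l,[pt],\ldots,[pt],\gamma_1\cdot[Z],\ldots,\gamma_q\cdot[Z]\rangle^X_A$ into the comparison Theorem \ref{comparison}: by minimality only the single partition $\mu_0$ survives in the sum, so this absolute invariant equals $\langle \alpha_1,\ldots,\alpha_l\mid\mu_0\rangle^{X,Z}_A\neq 0$ and still carries $k$ point insertions. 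Without this second pass (degeneration to extract a minimal nonzero relative invariant, then comparison to convert it back to an absolute one), the argument does not go through; your reduction to $r=Z\cdot A+1$ by divisor padding and your treatment of the second statement are fine, but they rest on the unproved claim $I\neq 0$.
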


\begin{proof}
Since we can always increase the number of $Z$-insertions by
adding divisor insertions in (\ref{rcsd-1}), therefore, without
loss of generality, we may assume that $r=Z\cdot A +1$. Consider
the following Gromov-Witten invariant of $X$:
\begin{equation}\label{x-invariant}
\langle \alpha_1,\cdots,\alpha_l,[pt] ,\cdots, [pt]
,\beta_{k+1}\cdot [Z],\cdots,\beta_r\cdot [Z]\rangle^X_A.
\end{equation}

If the invariant (\ref{x-invariant}) is nonzero, then we are done.
So in the following we assume that the invariant
(\ref{x-invariant}) equals zero.

To find a nonzero Gromov-Witten invariant of $X$ with at least $k$
point insertions, we first apply the degeneration formula to the
invariant (\ref{x-invariant}) to obtain a nonzero relative
Gromov-Witten invariant of $(X,Z)$ with at least $k$ point
insertions, then use our comparison theorem to obtain a nonzero
Gromov-Witten invariant of $X$.

We perform the symplectic cutting along the boundary of a tubular
neighborhood of $Z$. Then we have $\overline{X}^- = X$,
$\overline{X}^+ =Y= {\mathbb P}(N_{Z|X}\oplus {\mathbb C})$. Since
$\beta_i\in H^*(Z,{\mathbb R})$, we choose the support of
$[pt]\cdot Z$, $\beta_i\cdot Z$ near $Z$. Then,$([pt] )^- =0$,
$(\beta_i\cdot Z)^- = 0$, $([pt] )^+ = [pt]$, $(\beta_i\cdot Z)^+
= \iota^!(\beta_i)$. Here $\iota$ in the second term is understood
as the inclusion map of $Z$ via the zero section into $Y={\mathbb
P}(N_{Z|X}\oplus {\mathbb C})$. Up to a rational multiple, each
$\alpha_i$ is Poincar\'e dual to an immersed submanifold $W_i$. We
can perturb $W_i$ to be transverse to $Z$. In a neighborhood of
$Z$, $W_i$ is $\pi^{-1}(W_i\cap Z)$, where $\pi: N_{Z|X}
\longrightarrow Z$ is the projection. Clearly, $\pi$ induces the
projection ${\mathbb P}(N_{Z|X}\oplus {\mathbb C})\longrightarrow
Z$, still denoted by $\pi$. The symplectic cutting naturally
decomposes $W_i$ into $W^-_i = W_i$, $W^+_i = h^*(\alpha_i|_Z) =
h^*\tilde{\alpha}_i$. In other words, we can choose $\alpha_i^- =
\alpha_i$, $\alpha^+_i = h^*(\alpha_i|_Z)$.

Now we apply the degeneration formula for the  invariant
$$
\langle \alpha_1,\cdots,\alpha_l,[pt] ,\cdots, [pt]
,\beta_{k+1}\cdot [Z],\cdots,\beta_r\cdot [Z] \rangle^X_A.
$$
and express it as a summation of products of relative invariants
of $(X,Z)$ and $(Y,D)$. Moreover, from the degeneration formula,
each summand $\Psi_C$ may consist of a product of relative
Gromov-Witten invariants with disconnected domain curves of both
$(X,Z)$ and $(Y,D)$.

On the side of $Y$, there may be several disjoint components.   We
claim that every component on the side of $Y$ is a multiple of the
fiber class. Suppose that there is a component on the side of $Y$
which has a homology class $A' + \mu F$ where $A'\in
H_2(Z,{\mathbb Z})$ and $F$ is the fiber class. Then we have
$Z\cdot ( A'+\mu F) = Z\cdot A' +\mu$. Since every component must
intersect the infinity section of $Y$,  $\mu >0$. Therefore, we
have $Z\cdot (A'+\mu F)\geq V + 1$. From Proposition
\ref{prop3-1}, we know that the contribution of this component to
the corresponding relative Gromov-Witten invariants of $(Y,D)$
must be zero. So the corresponding summand in the degeneration
formula must be zero. From Proposition \ref{prop3-5} and the same
argument as in the proof of Theorem \ref{comparison} , we have
\begin{eqnarray*}
 0 &=& \langle \alpha_1,\cdots,\alpha_l, [pt] ,\cdots, [pt] ,\beta_{k+1}\cdot
[Z],\cdots,\beta_r\cdot [Z]
 \rangle^X_A\\
&=& \sum_\mu C_\mu \langle \alpha_1,\cdots,\alpha_l \mid \mu
 \rangle^{X,Z}_A\\
& & + \langle i^*\alpha_1,\cdots,i^*\alpha_l,[pt],\cdots, [pt], \beta_{k+1}\cdot [Z],\cdots, \beta_r\cdot [Z]\mid \emptyset \rangle^{Y,D}_A\\
& = & \sum_\mu C_\mu \langle \alpha_1,\cdots,\alpha_l \mid \mu
 \rangle^{X,Z}_A \\
& &+ \langle i^*\alpha_1,\cdots,i^*\alpha_l,[pt] ,\cdots, [pt] ,
\beta_{k+1} ,\cdots, \beta_r \rangle^Z_A,
\end{eqnarray*}
where we used Theorem \ref{divisor-invariant} in the last equality
and the summation runs over the possible partitions $\mu=\{(1,
[pt]), \cdots, (1,[pt]), (1, \gamma_1),\cdots, (1,\gamma_q),
(1,[Z])$, $\cdots,(1,[Z])\}$ where $\gamma_i$ are the product of
some $\beta_i$ classes.. From our assumption (\ref{rcsd-1}), we
have
\begin{equation}\label{nonzero-sum}
\sum_\mu C_\mu \langle \alpha_1,\cdots,\alpha_l \mid \mu
 \rangle^{X,Z}_A
\not= 0.
\end{equation}

Denote by $\langle \alpha_1,\cdots,\alpha_l\mid
\mu_0\rangle^{X,Z}_A$ the minimal nonzero relative invariant in
the summand (\ref{nonzero-sum})in the sense of Definition
\ref{order}. Write $\mu_0=\{(1, [pt]),\cdots$, $(1,[pt]),
(1,\gamma_1),\cdots,(1,\gamma_q),(1,[Z]),\cdots, (1,[Z])\}$. Then
it is easy know that the product of any two $\gamma_i$ and
$\gamma_j$ vanishes.

Now we consider the following absolute Gromov-Witten invariant
$$
\langle \alpha_1,\cdots,\alpha_l, [pt],\cdots,[pt],\gamma_1\cdot
[Z],\cdots,\gamma_l\cdot [Z],[Z],\cdots, [Z]\rangle^X_A.
$$
Applying the degeneration formula and always distribute the point
insertions to the side of $(Y,D)$. Therefore, from Theorem
\ref{comparison}, we have
\begin{eqnarray*}
& &\langle \alpha_1,\cdots,\alpha_l, [pt],\cdots,[pt],
\gamma_1\cdot
[Z], \cdots, \gamma_q\cdot [Z]\rangle^X_A\\
&=& \langle \alpha_1,\cdots,\alpha_l\mid \mu_0\rangle^{X,Z}_A
\not= 0.
\end{eqnarray*}
This implies that $X$ is $k$-point strongly rationally connected.
This proves our theorem.

\end{proof}

It is well-known that ${\mathbb P}^{n-1}$ is strongly rationally
connected. Therefore, from Theorem \ref{rcsd}, we have
\begin{cor}
Let $(X,\omega)$ be a compact $2n$-dimensional symplectic manifold
which contains a submanifold $P$ symplectomorphic to ${\mathbb
P}^{n-1}$ whose normal Chern number  $m\geq 2$. Then $X$ is
strongly rationally connected.

\end{cor}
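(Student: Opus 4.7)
The plan is to deduce the corollary as an immediate application of Theorem \ref{rcsd} with $Z=P$ and $k=2$, so the task reduces to verifying the hypotheses of that theorem for the projective space divisor.

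First I would set $Z = P \cong \mathbb{P}^{n-1}$ and identify the quantity $V$ of \eqref{min-number}. Since $C_1(N_{Z|X}) = m H$ where $H$ is the hyperplane class on $\mathbb{P}^{n-1}$, and every stably effective class in $\mathbb{P}^{n-1}$ is a positive multiple $d\ell$ of the line class $\ell$, we get $C_1(N_{Z|X})(d\ell) = dm > 0$ for all such classes. This simultaneously shows that $Z$ is a \emph{positive} symplectic divisor in the sense of the paper, and that $V = m$, attained by the minimal class $A = \ell$.

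Next I would verify that $P$ is $2$-point strongly rationally connected: the standard three-point genus zero invariant
\[
   \langle [pt], [pt], H\rangle^{\mathbb{P}^{n-1}}_{\ell} = 1,
\]
which follows from the uniqueness of a line through two generic points combined with the divisor axiom (a direct dimension count confirms the invariant is of the correct dimension, since $2c_1(\ell) + 2(n-4) + 6 = 4n-2 = 2(n-1) + 2(n-1) + 2$). This is exactly a nonzero invariant of the form \eqref{rcsd-1} with $k=2$, $l=0$, $A=\ell$, $\beta_3 = H$, and $r = 3$. Since the hypothesis $m \geq 2$ gives $r = 3 \leq V + 1 = m + 1$, all conditions of Theorem \ref{rcsd} are satisfied.

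Finally, I would invoke Theorem \ref{rcsd} directly to conclude that $X$ is $2$-point strongly rationally connected, which by Definition \ref{rc} is precisely the assertion that $X$ is strongly rationally connected. There is no real obstacle here since the corollary is essentially a packaging result; the only thing one needs to check with minor care is that the minimality condition $Z \cdot A = V$ holds for the line class (it does, with equality $m = m$) and that the extra insertion $H$ is legitimately a class in $H^*(Z,\mathbb{R})$ as required by the statement of Theorem \ref{rcsd}.
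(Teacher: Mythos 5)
Your proposal is correct and follows essentially the same route as the paper: the paper simply notes that $\mathbb{P}^{n-1}$ is strongly rationally connected and invokes Theorem \ref{rcsd}, which is exactly what you do, with the added (correct) bookkeeping that $V=m$, the line class $\ell$ is minimal, and $\langle [pt],[pt],H\rangle^{\mathbb{P}^{n-1}}_{\ell}=1$ gives the required nonzero invariant with $r=3\leq m+1$.
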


\end{document}